\documentclass[11pt,reqno]{amsart}

\usepackage[T1]{fontenc}
\usepackage[utf8]{inputenc}
\usepackage{mathpazo}
\usepackage[scaled=.95]{helvet}
\usepackage{microtype}
\usepackage[margin=1.15in]{geometry}
\usepackage{amsmath,amssymb,mathtools}
\usepackage{xcolor}
\usepackage{hyperref}
\usepackage[nameinlink,noabbrev]{cleveref}

\hypersetup{
  colorlinks=true,
  linkcolor=blue!60!black,
  citecolor=blue!60!black,
  urlcolor=blue!60!black,
  pdfauthor={Sobral-Teixeira},
  pdftitle={Sobral-Teixeira}
}

\numberwithin{equation}{section}

\usepackage{aliascnt}

\theoremstyle{plain}
\newtheorem{theorem}{Theorem}[section]

\newaliascnt{proposition}{theorem}
\newtheorem{proposition}[proposition]{Proposition}
\aliascntresetthe{proposition}

\newaliascnt{lemma}{theorem}
\newtheorem{lemma}[lemma]{Lemma}
\aliascntresetthe{lemma}

\newaliascnt{corollary}{theorem}
\newtheorem{corollary}[corollary]{Corollary}
\aliascntresetthe{corollary}

\newaliascnt{conjecture}{theorem}

\aliascntresetthe{conjecture}

\theoremstyle{definition}

\newaliascnt{definition}{theorem}
\newtheorem{definition}[definition]{Definition}
\aliascntresetthe{definition}

\newaliascnt{assumption}{theorem}

\aliascntresetthe{assumption}

\theoremstyle{remark}

\newaliascnt{remark}{theorem}
\newtheorem{remark}[remark]{Remark}
\aliascntresetthe{remark}

\newcommand{\R}{\mathbb R}
\newcommand{\Rn}{\mathbb R^n}
\newcommand{\Sn}{\mathbb S^n}
\newcommand{\norm}[1]{\left\lVert #1\right\rVert}
\newcommand{\abs}[1]{\left\lvert #1\right\rvert}
\newcommand{\Acal}{\mathcal A}

\newcommand{\Adist}[2]{\operatorname{dist}_{#1}\!\left(#2,\Acal\right)}
\newcommand{\dist}{\mathrm{dist}}
\newcommand{\diam}{\mathrm{diam}}

\usepackage{parskip} 
\title[Endpoint Differentiability]{Endpoint Differentiability Moduli for Fully Nonlinear Elliptic Equations}

\author[A. Sobral]{Aelson Sobral}
\address{Applied Mathematics and Computational Sciences (AMCS), Computer, Electrical and Mathematical Sciences and Engineering Division (CEMSE), King Abdullah University of Science and Technology (KAUST), Thuwal, 23955-6900, Kingdom of Saudi Arabia}
\email{aelson.sobral@kaust.edu.sa}

\author[E. V. Teixeira]{Eduardo V. Teixeira}
\address{Department of Mathematics, Oklahoma State University, 74078, Stillwater, OK USA}{}
\email{eduardo.teixeira@okstate.edu}

\subjclass[2020]{Primary 35B65, 35J60. Secondary 35D40, 35B45, 35J15}
\keywords{Fully nonlinear elliptic equations, viscosity solutions, borderline regularity}

\begin{document}

\begin{abstract}
A classical consequence of Caffarelli's fully nonlinear regularity theory
\textup{[L. A. Caffarelli, Ann. of Math. (2) \textbf{130} (1989),
no. 1, 189--213]} is that 
viscosity solutions of uniformly elliptic equations
\(F(D^2u)=f\), with \(f\in L^p\), \(p>n\), are locally $C^{1,\gamma}$ for every
$\gamma<\min\{\alpha_H,\sigma_p\}$.  Here
$\alpha_H=\alpha_H(n,\lambda,\Lambda) \in (0,1)$ denotes the  universal H\"older continuity exponent for gradient regularity of the homogeneous PDE, $F(D^2h) = 0$, and
$\sigma_p:=1-\frac{n}{p}$
is the scaling exponent of the source term.  In the source-limited regime,
$\sigma_p<\alpha_H$, the singularity of $f$ is the decisive obstruction, and the
corresponding endpoint regularity, $\gamma=\sigma_p$, is known to be attainable.  The
homogeneous-limited regime $\alpha_H\le\sigma_p$ is considerably more subtle and the
classical theory only reaches the exponents $\gamma<\alpha_H$.  Thus the
limiting differentiability estimate dictated by the homogeneous equation
remains unquantified.  This is the central endpoint gap
addressed in the present paper. In the strict homogeneous-limited regime, $\alpha_H<\sigma_p$, we prove that solutions admit
pointwise Taylor expansions with remainder
$$
        |u(x)-u(x_0)-Du(x_0)\cdot(x-x_0)|
        \lesssim
        |x-x_0|^{1+\alpha_H}
        \left(1+\log\frac1{|x-x_0|}\right)^m .
$$
Thus the homogeneous differentiability scale is reached up to an explicit
logarithmic defect.  At the critical threshold, $\alpha_H=\sigma_p$, finite
logarithmic powers no longer close the iteration; nevertheless, a slower
selection of scales yields the sub-power endpoint modulus
$$
        |u(x)-u(x_0)-Du(x_0)\cdot(x-x_0)|
        =
        O\left(
        |x-x_0|^{1+\alpha_H}
        \exp\left(
        A\sqrt{1+\log\frac1{|x-x_0|}}
        \right)
        \right).
$$
Both estimates improve the full family of classical sub-endpoint
$C^{1,\gamma}$ bounds, $\gamma<\alpha_H$, by providing quantitative
differentiability information at the limiting homogeneous exponent. The proof identifies a new scale-selection mechanism for endpoint Campanato-type recurrences, suggesting a flexible tool for regularity problems
where the limiting smoothness is dictated by the homogeneous theory itself.  We also
discuss the role and possible optimality of the logarithmic defects that arise at this endpoint scale.
\tableofcontents

\end{abstract}

\maketitle

\section{Introduction}

Interior $C^{1,\alpha}$ estimates for fully nonlinear elliptic equations stand
among the landmark achievements of modern nonlinear PDE theory.  In the
foundational works of Caffarelli \cite{Caffarelli1989} and Trudinger
\cite{Trudinger1988}, these estimates emerged as a central mechanism by which
uniform ellipticity yields compactness, stability, and ultimately differentiability of
solutions.

The general uniformly elliptic theory, however, has an intrinsic limitation.
Without convexity, concavity, or additional structure, one cannot in general
go beyond the universal $C^{1,\alpha}$ threshold. This limitation is not merely a byproduct of the available techniques; it is
an intrinsic feature of the nonlinear diffusion encoded by general uniformly
elliptic operators.  One of the central achievements of the
Nadirashvili--Vl\u{a}du\c{t} program \cite{NadirashviliVladut2007,NadirashviliVladut2008,
NadirashviliVladut2011,NadirashviliVladut2013a,
NadirashviliVladut2013b} was to make this obstruction explicit,
through the construction of singular and nonclassical solutions.   

Thus, for a general uniformly elliptic operator $F$, the homogeneous equation
\[
        F(D^2h)=0
\]
comes with a distinguished exponent
\[
        \alpha_H=\alpha_H(n,\lambda,\Lambda)\in(0,1),
\]
for which solutions are universally $C^{1,\alpha_H}$, while no better exponent
is available under uniform ellipticity alone.

Endpoint phenomena at limiting scales arise in several forms in the
regularity theory of fully nonlinear equations.  One line of work concerns
critical integrability of the right-hand side and the resulting Sobolev,
Campanato--John--Nirenberg, or borderline estimates
\cite{CaffarelliHuang2003,Escauriaza1993,
DaskalopoulosKuusiMingione2014,Swiech1997,Teixeira2014}.  In that setting,
the limiting behavior is governed primarily by the size, singularity, or
scaling of the inhomogeneity.  A complementary line of work obtains sharper
regularity from additional structure in the equation, through geometric,
degenerate, asymptotic, or potential-theoretic mechanisms
\cite{TeixeiraUrbano2014,AraujoRicarteTeixeira2015,
PimentelTeixeira2016,Teixeira2016,PimentelSantosTeixeira2022,
PimentelWalker2023,AraujoSobralTeixeira2025}.

The endpoint considered in the present paper is of a different kind.  It is
not a peripheral borderline refinement attached to the datum, but a question
at the core of the general uniformly elliptic theory.  It asks what remains of
Caffarelli's perturbative regularization mechanism when the source term is no
longer the limiting obstruction and the decisive scale is instead imposed by
the homogeneous nonlinear diffusion itself.  We
consider viscosity solutions of
\begin{equation}\label{eq:intro-equation}
        F(D^2u)=f
        \qquad\text{in }\Omega\subset\Rn ,
\end{equation}
where $F\colon\Sn\to\mathbb R$ is $(\lambda,\Lambda)$-elliptic, $F(0)=0$, and
$f\in L^p(\Omega)$, $p>n$.  The source term has the natural scaling exponent
\[
        \sigma_p \coloneqq 1-\frac{n}{p}.
\]
Caffarelli's perturbative theory \cite{Caffarelli1989} gives the sharp
open-range estimate
\[
        u\in C^{1,\gamma}_{\mathrm{loc}}(\Omega)
        \qquad
        \text{for every }
        0<\gamma<\min\{\alpha_H,\sigma_p\}.
\]
This estimate reflects the competition between two distinct scales: the
inhomogeneous scale $\sigma_p$, dictated by the right-hand side, and the
homogeneous scale $\alpha_H$, dictated by the universal $C^{1,\alpha_H}$
regularity of solutions to $F(D^2h)=0$. When $\sigma_p<\alpha_H$, the source term is the limiting feature, and the
endpoint dictated by scaling can be reached.  The homogeneous-limited regime
\[
        \alpha_H\le \sigma_p
\]
has a different character.  In this case the datum is, from the viewpoint of
scaling, at least as regular as the homogeneous theory permits; the limiting
barrier is therefore not the roughness of $f$, but the universal
differentiability ceiling imposed by the homogeneous fully nonlinear equation
itself.  Thus the question lies at the core of the general theory: can the classical
open-range estimate be promoted to a quantitative modulus at the intrinsic
homogeneous scale?

The classical argument stops short of this limit.  It yields estimates at
every exponent $\gamma<\alpha_H$, but it does not produce a modulus at
$\gamma=\alpha_H$.  In this sense, the universal homogeneous exponent marks a genuine endpoint
gap in the quantitative theory.  The purpose of this paper is to resolve this
gap at the level of quantitative moduli, reaching the homogeneous
differentiability scale up to explicit logarithmic corrections.

In the strict homogeneous-limited regime $\alpha_H<\sigma_p$, we prove that solutions admit
pointwise Taylor expansions at the universal homogeneous differentiability
scale.  More precisely, for every
$x_0\in\Omega$ and every $B_R(x_0)\Subset\Omega$, we show 
\begin{equation}\label{eq:intro-log-estimate}
        \sup_{B_s(x_0)} |u-\left [ u(x_0)+Du(x_0)\cdot(x-x_0) \right ]|
        \le
        C\Theta_{R,-}(x_0)\,
        s^{1+\alpha_H}
        \left(1+\log\frac Rs\right)^m,
        \qquad 0<s\le R,
\end{equation}
where
\[
        \Theta_{R,-}(x_0)
        \coloneqq
        \frac{\Adist{B_R(x_0)}{u}}{R^{1+\alpha_H}}
        +
        R^{1-\frac np-\alpha_H}\|f\|_{L^p(B_R(x_0))}.
\]
Since the logarithmic factor is lower order than any loss of power,
\eqref{eq:intro-log-estimate} is stronger than the full family of classical
sub-endpoint estimates $C^{1,\gamma}$, $\gamma<\alpha_H$.  Its main significance,
however, is that it supplies a universal modulus of differentiability at the
homogeneous scaling level itself.  Thus the limiting exponent dictated by the
fully nonlinear homogeneous theory is no longer only approached from below; it
is reached quantitatively, with the residual endpoint obstruction measured by
an explicit logarithmic correction.

The form of the endpoint modulus depends on the distance between the source
scale and the homogeneous scale.  In the strict homogeneous-limited regime
$\alpha_H<\sigma_p$, the inhomogeneous term carries a positive excess of
scaling.  This excess can be spent to compensate for the noncontractive
homogeneous constant, producing only a finite logarithmic correction.  At the
critical threshold $\alpha_H=\sigma_p$, this excess disappears.  The same
fixed logarithmic normalization no longer controls the perturbative
accumulation, and the scale selection must be slowed down, leading instead to
a sub-power correction.

For $A>0$ sufficiently large,
\begin{equation}\label{eq:intro-critical-estimate}
        \sup_{B_s(x_0)} |u-P_{x_0}|
        \le
        C\Theta_R(x_0)\,
        s^{1+\alpha_H}
        \exp\left(
        A\sqrt{1+\log\frac Rs}
        \right),
        \qquad 0<s\le R,
\end{equation}
where
\[
        \Theta_R(x_0)
        \coloneqq
        \frac{\Adist{B_R(x_0)}{u}}{R^{1+\alpha_H}}
        +
        \|f\|_{L^p(B_R(x_0))}.
\]
The correction in \eqref{eq:intro-critical-estimate} grows faster than any
fixed power of $\log(R/s)$, but slower than every negative power of $s$.
Consequently, the critical estimate also improves every
$C^{1,\gamma}$ bound with $\gamma<\alpha_H$.

These pointwise estimates yield, in particular, interior moduli of continuity
for the gradient.  For every $\Omega'\Subset\Omega$, the solution is
differentiable in $\Omega'$, and $Du$ satisfies the corresponding logarithmic
H\"older-type modulus.  Thus the classical sub-endpoint differentiability
theory is upgraded to quantitative endpoint information at the homogeneous
scale.

We now describe the obstruction and the mechanism behind the proof.  The
homogeneous theory gives the affine approximation estimate
\begin{equation}\label{eq:intro-homogeneous-estimate}
        \Adist{B_\rho}{h}
        \le
        C_H\rho^{1+\alpha_H}\|h\|_{L^\infty(B_1)},
        \qquad 0<\rho<1,
\end{equation}
where $\Adist{B_\rho}{h}$ denotes the $L^\infty(B_\rho)$-distance from $h$ to
the space of affine functions.  Combining \eqref{eq:intro-homogeneous-estimate}
with ABP comparison and a homogeneous replacement argument yields the two-scale
excess inequality
\begin{equation}\label{eq:intro-two-scale-excess}
        \Adist{B_s(x_0)}{u}
        \le
        C_H\left(\frac{s}{r}\right)^{1+\alpha_H}
        \Adist{B_r(x_0)}{u}
        +
        C r^{2-\frac np}\|f\|_{L^p(B_r(x_0))}
\end{equation}
for $0<s<r$. This recurrence contains the endpoint obstruction in its simplest form.  If
one normalizes by the limiting scale $r^{1+\alpha_H}$, the homogeneous term
has exactly the correct decay, but it carries the noncontractive constant
$C_H$.  A fixed-scale Campanato iteration avoids this obstruction by lowering
the exponent; this is precisely what yields the classical family
$C^{1,\gamma}$, $\gamma<\alpha_H$.

The point of the present argument is to keep the homogeneous exponent and
alter the scales instead.  The radii are chosen dynamically so that the endpoint
normalization absorbs the noncontractive homogeneous constant, while the
inhomogeneous term remains summable at the same decay level.  Thus the loss is
not paid in the exponent, but in an explicit endpoint modulus. 

A useful feature of the argument is its stability under quantitative
improvements of the homogeneous theory.  The constants $\alpha_H$ and $C_H$
enter only through the homogeneous affine approximation estimate.  Thus,
whenever a subclass of uniformly elliptic operators admits a sharper or more
explicit homogeneous $C^{1,\alpha}$ estimate, the same scale-selection
argument immediately produces the corresponding endpoint modulus for the
inhomogeneous problem.  In this sense, the theorem converts quantitative
information at the homogeneous level into quantitative differentiability at
the endpoint scale.

A second conceptual outcome of the proof is an abstract endpoint Campanato
principle.  The relevant analytic input is a two-scale recurrence of the form
\[
        E(s)
        \le
        C_0\left(\frac{s}{r}\right)^\beta E(r)
        +
        \Phi(r),
        \qquad 0<s<r,
\]
where $\beta$ is the limiting decay exponent supplied by the homogeneous
theory.  Such a recurrence is not, by itself, an endpoint estimate: the
constant $C_0$ generally prevents contraction at exponent $\beta$ on fixed
geometric scales.  The scale-selection mechanism developed here converts this
noncontractive recurrence into a quantitative endpoint modulus, without
sacrificing the homogeneous exponent.

Recurrences of this type arise naturally in boundary regularity, anisotropic
parabolic problems, nonlocal equations with tail terms, critical-point degenerate models, and higher-order
approximation schemes, among other settings.  Thus the scale-selection
algorithm isolated here may provide a flexible tool for endpoint regularity
problems beyond the inhomogeneous fully nonlinear elliptic model treated in
this paper.

Finally, we discuss the role of logarithmic defects and their possible
optimality.  Classical endpoint examples show that logarithmic losses are
natural, and in some regimes sharp, at critical scales.  We also examine
singular-profile tests related to the Nadirashvili--Vl\u{a}du\c{t} program.
These examples show that logarithmic defects are visible at the critical scale
$\sigma_p=\alpha_H$, while also explaining why standard singular profiles do
not decide the sharpness of the logarithmic loss in the strict
homogeneous-limited regime $\alpha_H<\sigma_p$.

The paper is organized as follows.  In \Cref{sec:setting} we recall the
ellipticity assumptions, the $L^p$-viscosity framework, and the homogeneous
$C^{1,\alpha_H}$ estimate.  In \Cref{sec:replacement} we prove the homogeneous
replacement estimate and derive the two-scale excess inequality
\eqref{eq:intro-two-scale-excess}.  In \Cref{sec:iteration} we prove the
logarithmic endpoint estimate in the strict homogeneous-limited regime
$\alpha_H<\sigma_p$.  In \Cref{sec:borderline-equality} we treat the critical
case $\alpha_H=\sigma_p$.  In \Cref{sec:abstract} we extract the common
iteration as an abstract scale-selection principle for endpoint Campanato
recurrences. Finally, in \Cref{sec:optimality} we discuss the role and
possible optimality of logarithmic defects in endpoint regularity theory.

\section{Setting and homogeneous input}\label{sec:setting}

Throughout, \(n\ge2\), and \(\Sn\) denotes the space of real symmetric
\(n\times n\) matrices. Let \(0<\lambda\le\Lambda\).  The Pucci extremal
operators are
\[
        \mathcal M^+_{\lambda,\Lambda}(X)
        \coloneqq
        \Lambda\sum_{e_i>0}e_i+
        \lambda\sum_{e_i<0}e_i,
        \qquad
        \mathcal M^-_{\lambda,\Lambda}(X)
        \coloneqq
        \lambda\sum_{e_i>0}e_i+
        \Lambda\sum_{e_i<0}e_i,
\]
where \(e_i\) are the eigenvalues of \(X\in\Sn\).

\begin{definition}
A continuous operator \(G\colon \Sn\to\R\), is \((\lambda,\Lambda)\)-elliptic if
\begin{equation}\label{eq:ellipticity}
        \mathcal M^-_{\lambda,\Lambda}(X-Y)
        \le
        G(X)-G(Y)
        \le
        \mathcal M^+_{\lambda,\Lambda}(X-Y)
\end{equation}
for every \(X,Y\in\Sn\).
\end{definition}

Throughout the paper, we denote by $\mathfrak{F}_{\lambda,\Lambda}$ the family of continuous \((\lambda,\Lambda)\)-elliptic operators \(G\colon\Sn\to\R\), normalized by \(G(0)=0\).

All solutions with a nonzero right-hand side are understood in the \(L^p\)-viscosity sense. This convention is harmless for the bounded right-hand sides and is the natural framework for the stability and comparison statements used below. We refer to \cite{CrandallIshiiLions1992, CCKS1996}
for the viscosity framework and to the latter work for equations with
measurable ingredients.

\begin{theorem}\label{thm:homogeneous-estimate}
There exist constants
\[
        \alpha_H=\alpha_H(n,\lambda,\Lambda)\in(0,1),
        \qquad
        C_H=C_H(n,\lambda,\Lambda)\ge 1,
\]
such that the following holds. For every \(G\in\mathfrak{F}_{\lambda,\Lambda}\) and every bounded viscosity solution \(h\in C(B_1)\) of
\[
        G(D^2h)=0
        \qquad\text{in }B_1,
\]
one has \(h\in C^{1,\alpha_H}_{\mathrm{loc}}(B_1)\). Moreover,
\begin{equation}\label{eq:homogeneous-estimate}
        \Adist{B_r}{h}
        \le
        C_H r^{1+\alpha_H}
        \norm{h}_{L^\infty(B_1)}
        \qquad\text{for every }0<r<1 .
\end{equation}
Here \(\Acal\) denotes the space of affine functions on \(\mathbb{R}^n\).
\end{theorem}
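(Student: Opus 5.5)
This is the classical Caffarelli--Trudinger $C^{1,\alpha}$ theory for the homogeneous equation. I would prove it by Krylov--Safonov regularity applied to incremental quotients, followed by a rescaling argument that turns the interior $C^{1,\alpha_H}$ bound into the affine approximation estimate \eqref{eq:homogeneous-estimate}.

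\textbf{Step 1 (Krylov--Safonov and difference quotients).} Since $G(0)=0$ and $G$ is $(\lambda,\Lambda)$-elliptic, any viscosity solution of $G(D^2h)=0$ satisfies
\[
\mathcal M^-_{\lambda,\Lambda}(D^2h)\le 0\le \mathcal M^+_{\lambda,\Lambda}(D^2h).
\]
The Krylov--Safonov Harnack inequality therefore gives $h\in C^{\alpha_0}$ with universal $\alpha_0$. The operator $G$ is translation invariant. Hence for a unit vector $e$ and small $\tau>0$, the incremental quotient
\[
w_\tau=\frac{h(x+\tau e)-h(x)}{\tau^{\beta}}
\]
lies in the class $S(\lambda,\Lambda,0)$, which is the key structural fact \cite{Caffarelli1989}. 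Iterating the H\"older estimate for this class (the Caffarelli--Cabr\'e bootstrap: $\beta=\alpha_0, 2\alpha_0,\dots$ until $\beta$ exceeds $1$) shows first that $h$ is Lipschitz in $B_{3/4}$. Applying it once more to the first-order quotients $(h(x+\tau e)-h(x))/\tau$ gives uniform $C^{\alpha_H}$ bounds in $B_{1/2}$. Here one may take $\alpha_H=\alpha_0$, or any smaller universal exponent. Letting $\tau\to0$ yields
\[
\|h\|_{C^{1,\alpha_H}(B_{1/2})}\le C\|h\|_{L^\infty(B_1)}.
\]

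\textbf{Step 2 (from the gradient bound to \eqref{eq:homogeneous-estimate}).} For $r\le 1/2$, use the Taylor polynomial $\ell(x)=h(0)+Dh(0)\cdot x$. By the mean value theorem,
\[
\sup_{B_r}|h-\ell|\le [Dh]_{C^{\alpha_H}(B_{1/2})}\,r^{1+\alpha_H}\le C r^{1+\alpha_H}\|h\|_{L^\infty(B_1)}.
\]
For $1/2\le r<1$, take the affine function $\ell\equiv0$, which gives
\[
\Adist{B_r}{h}\le \|h\|_{L^\infty(B_1)}\le 2^{1+\alpha_H}r^{1+\alpha_H}\|h\|_{L^\infty(B_1)}.
\]
Setting $C_H$ equal to the maximum of the two constants, and at least $1$, completes the estimate. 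Local $C^{1,\alpha_H}$ regularity on all of $B_1$ follows from Step 1 applied on small balls $B_\rho(y)\subset B_1$, using the scaling $h_\rho(x)=h(y+\rho x)$, which solves $G_\rho(D^2h_\rho)=0$ with $G_\rho(X)=\rho^2G(\rho^{-2}X)\in\mathfrak F_{\lambda,\Lambda}$.

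\textbf{Main obstacle.} The delicate point is justifying in Step 1 that difference quotients of viscosity solutions solve the Pucci inequalities in the viscosity sense. The difference of two viscosity solutions is not directly known to be a subsolution without the comparison machinery for $S(\lambda,\Lambda,0)$. I would invoke \cite[Thm.~5.3]{CaffarelliCabre} together with the Jensen--Ishii sup/inf-convolution argument \cite{CrandallIshiiLions1992} rather than reprove it. The dependence of $\alpha_H$ and $C_H$ only on $(n,\lambda,\Lambda)$ is then automatic, since every constant above comes from the Krylov--Safonov theory for the Pucci class.
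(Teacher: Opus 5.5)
Your proposal is correct and is essentially the argument the paper relies on. The paper gives no proof of its own and simply cites Caffarelli--Cabr\'e, Section 5.3, whose proof is the same incremental-quotient bootstrap in the Pucci class $S(\lambda,\Lambda,0)$ that you describe (shrink $\alpha_0$ if needed so that $1/\alpha_0\notin\mathbb N$ and no bootstrap step lands exactly on exponent $1$); your Step 2 is the routine conversion of the $C^{1,\alpha_H}(B_{1/2})$ bound into the affine-distance estimate for all $0<r<1$.
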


This is the standard interior \(C^{1,\alpha}\) estimate for viscosity solutions of homogeneous uniformly elliptic fully nonlinear equations. See, for instance, Caffarelli--Cabré~\cite[Section 5.3]{CaffarelliCabre1995}.

Finally, given a number \(R>0\) we define
\begin{equation}\label{eq:log-weight}
        L_R(r)
        \coloneqq
        1+\log\frac{R}{r},
        \qquad 0<r\le R.
\end{equation}

\section{Comparison with the homogeneous replacement}\label{sec:replacement}

The first ingredient is a comparison with the homogeneous replacement. We state it in the form needed for $L^p$, with $p \geq n$, right-hand sides.

\begin{lemma}\label{lem:homogeneous-replacement}
Consider \(G\in\mathfrak{F}_{\lambda,\Lambda}\), \(g\in L^p(B_1)\) for $p \geq n$, and let \(w,h\in C(\overline B_1)\) satisfy
\[
        G(D^2w)=g, \qquad G(D^2h)=0 \qquad \text{in } B_1,
\]
with \(w=h\) on \(\partial B_1\). Then
\begin{equation}\label{eq:homogeneous-replacement}
        \norm{w-h}_{L^\infty(B_1)}
        \le
        C\norm{g}_{L^p(B_1)},
\end{equation}
where \(C\) depends only on \(n,p,\lambda,\Lambda\).
\end{lemma}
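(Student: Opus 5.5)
The approach is to compare $w$ and $h$ through the Pucci extremal operators and then apply the Alexandroff--Bakelman--Pucci estimate to their difference. Set $v\coloneqq w-h\in C(\overline B_1)$, so that $v=0$ on $\partial B_1$. The first step is to show that $v$ is an $L^p$-viscosity solution of the pair of extremal inequalities
\[
        \mathcal M^+_{\lambda,\Lambda}(D^2v)\ge g
        \qquad\text{and}\qquad
        \mathcal M^-_{\lambda,\Lambda}(D^2v)\le g
        \qquad\text{in }B_1 .
\]
Formally this follows at once from \eqref{eq:ellipticity}, since $G(D^2w)-G(D^2h)=g$ is trapped between $\mathcal M^\mp(D^2w-D^2h)$. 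Here $w$ and $h$ are merely viscosity solutions, so this has to be justified at the viscosity level. Because $G$ is independent of $x$ and $h$ solves the homogeneous equation, $h$ is in fact $C^{1,\alpha_H}$ by \Cref{thm:homogeneous-estimate}, but not $C^2$. I would therefore invoke the standard result of Caffarelli--Crandall--Kocan--\'Swi\k{e}ch \cite{CCKS1996}, which says that the difference of an $L^p$-viscosity subsolution and supersolution of equations with the same operator lies in the appropriate Pucci class $\underline S(\lambda/n,\Lambda,g)\cap\overline S(\lambda/n,\Lambda,g)$ (see also \cite[Ch.~5]{CaffarelliCabre1995}). The condition $p\ge n$ is exactly what that framework requires.

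With this Pucci-class membership in hand, the second step applies the ABP maximum principle for $L^p$-viscosity solutions with $p\ge n$, as in \cite{CCKS1996} or \cite[Thm.~3.2]{CaffarelliCabre1995}. Applied to $v$ and to $-v$ on $B_1$ with zero boundary data, it gives
\[
        \sup_{B_1} v^{\pm}\le C(n,\lambda,\Lambda)\,\operatorname{diam}(B_1)\,\norm{g}_{L^n(B_1)} .
\]
Since $B_1$ has finite measure, H\"older's inequality yields $\norm{g}_{L^n(B_1)}\le |B_1|^{\frac1n-\frac1p}\norm{g}_{L^p(B_1)}$. This proves \eqref{eq:homogeneous-replacement} with a constant depending only on $n,p,\lambda,\Lambda$.

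The main obstacle is the first step, namely that the difference $w-h$ of viscosity solutions is a viscosity sub/supersolution of the Pucci inequalities with right-hand side $g$. This is a genuine theorem, proved via sup/inf-convolutions and the Jensen--Ishii lemma, and it is not a formal computation. My plan is to cite it rather than reprove it. If needed, I would present an alternative route through approximation. One would mollify $g$ to $g_k\in C(\overline B_1)$ and solve $G(D^2w_k)=g_k$ with boundary data $w$, for which the difference statement is classical in the continuous setting \cite[Thm.~5.3]{CaffarelliCabre1995}. One would then pass to the limit using $L^p$-stability and the ABP bound $\norm{w_k-w}_\infty\le C\norm{g_k-g}_{L^p}\to0$.
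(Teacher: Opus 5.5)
Your proposal follows the paper's proof: show that $\pm(w-h)$ satisfy the Pucci inequalities with right-hand side controlled by $|g|$, apply ABP to each with zero boundary data, and pass from $L^n$ to $L^p$ by H\"older; the only difference is that you justify the viscosity-level difference step by citing the $L^p$-viscosity theory, whereas the paper asserts it without comment. One caveat: your optional fallback is circular as written, because the bound $\norm{w_k-w}_{L^\infty(B_1)}\le C\norm{g_k-g}_{L^p(B_1)}$ (and likewise identifying the stability limit of the $w_k$ with $w$) is itself an instance of the lemma, so a genuine reduction to continuous data needs an extra device, e.g.\ replacing $w$ by $w\pm\phi_k$, where $\phi_k\in W^{2,p}$ is a strong solution of $\mathcal M^-_{\lambda,\Lambda}(D^2\phi_k)=|g-g_k|$ with $\phi_k=0$ on $\partial B_1$; this makes $w+\phi_k$ and $w-\phi_k$ $C$-viscosity sub- and supersolutions of $G(D^2\cdot)=g_k$, with $\norm{\phi_k}_{L^\infty(B_1)}\le C\norm{g-g_k}_{L^n(B_1)}\to0$.
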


\begin{proof}
Set \(z=w-h\). Both \(z\) and \(-z\) satisfy
\[
        \mathcal M^+_{\lambda,\Lambda}(D^2v)\ge -|g|
        \qquad\text{in }B_1
\]
in the viscosity sense. Since \(z=0\) on \(\partial B_1\), applying ABP to \(z\) and \(-z\) gives
\[
        \norm{z}_{L^\infty(B_1)}
        \le C\norm{g}_{L^n(B_1)}
        \le C\norm{g}_{L^p(B_1)} .
\]
Thus \(\norm{w-h}_{L^\infty(B_1)}\le C\norm{g}_{L^p(B_1)}\).
\end{proof}

Accordingly, we call \(h\) the \(G\)-homogeneous replacement of \(w\) in \(B_1\).

\begin{lemma}\label{lem:two-scale}
Let \(F\in\mathfrak{F}_{\lambda,\Lambda}\), and let \(u\in C(\overline{B} _R(x_0))\) be an \(L^p\)-viscosity solution of
\[
        F(D^2u)=f \qquad\text{in }B_R(x_0),
\]
with \(f\in L^p(B_R(x_0))\). Then, for every \(0<s<r\le R\),
\begin{equation}\label{eq:two-scale-excess}
        \Adist{B_s(x_0)}{u}
        \le
        C_H\left(\frac{s}{r}\right)^{1+\alpha_H}
        \Adist{B_r(x_0)}{u}
        +
        C r^{2 - \frac{n}{p}}\norm{f}_{L^p(B_r(x_0))},
\end{equation}
where \(C\) depends only on \(n,p,\lambda,\Lambda,C_H\).
\end{lemma}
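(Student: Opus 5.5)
Translate to \(x_0=0\) and rescale to the unit ball. Fix \(0<s<r\le R\), choose an affine function \(\ell_r\) with \(\norm{u-\ell_r}_{L^\infty(B_r)}=\Adist{B_r}{u}\) (a minimizer exists since \(\Acal\) is finite dimensional), and set
\[
        w(y)\coloneqq u(ry)-\ell_r(ry),\qquad y\in \overline B_1 .
\]
Since \(D^2\ell_r=0\), \(w\) is an \(L^p\)-viscosity solution of \(G(D^2w)=g\) in \(B_1\), with \(G(X)\coloneqq r^2F(r^{-2}X)\in\mathfrak F_{\lambda,\Lambda}\) and \(g(y)\coloneqq r^2f(ry)\). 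Change of variables gives
\[
        \norm{g}_{L^p(B_1)}=r^{2-\frac np}\norm{f}_{L^p(B_r)},
        \qquad
        \norm{w}_{L^\infty(B_1)}=\Adist{B_r}{u}.
\]

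Next let \(h\in C(\overline B_1)\) solve \(G(D^2h)=0\) in \(B_1\) with \(h=w\) on \(\partial B_1\). Existence follows from Perron's method on the ball, which satisfies a uniform exterior sphere condition, with continuous boundary data. By the maximum principle, \(\norm{h}_{L^\infty(B_1)}\le\norm{w}_{L^\infty(\partial B_1)}\le \Adist{B_r}{u}\). \Cref{lem:homogeneous-replacement} gives \(\norm{w-h}_{L^\infty(B_1)}\le C\norm{g}_{L^p(B_1)}\). \Cref{thm:homogeneous-estimate} gives, for \(\rho=s/r\in(0,1)\), an affine \(\ell^*\) with \(\norm{h-\ell^*}_{L^\infty(B_\rho)}\le C_H\rho^{1+\alpha_H}\Adist{B_r}{u}\). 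Therefore
\[
        \Adist{B_\rho}{w}\le \norm{w-\ell^*}_{L^\infty(B_\rho)}
        \le C_H\Big(\frac sr\Big)^{1+\alpha_H}\Adist{B_r}{u}+C\,r^{2-\frac np}\norm{f}_{L^p(B_r)} .
\]

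Finally, scale back. Affine functions are invariant under \(y\mapsto ry\) and under subtracting \(\ell_r\), so \(\Adist{B_s}{u}=\Adist{B_\rho}{w}\), which is \eqref{eq:two-scale-excess}. The constant \(C\) is the one from ABP in \Cref{lem:homogeneous-replacement}, so it depends only on \(n,p,\lambda,\Lambda\).

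The main technical point is justifying the replacement step in the \(L^p\)-viscosity framework. One needs solvability of the Dirichlet problem for \(h\) with continuous data, and stability of the difference inequality \(\mathcal M^+(D^2(w-h))\ge-|g|\) used in \Cref{lem:homogeneous-replacement}; for \(L^p\)-viscosity \(w\) this relies on \cite{CCKS1996}. A further subtlety is that \(u\) is only assumed continuous on \(\overline B_R\), so for \(r=R\) the boundary data on \(\partial B_1\) is still continuous, and Perron plus barriers give \(h\in C(\overline B_1)\). I expect this existence/regularity check, rather than the estimates, to be the only delicate step.
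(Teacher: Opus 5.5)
Your proposal is correct and follows the paper's proof: you rescale $u-\ell$ to $B_1$, take the $G_r$-homogeneous replacement $h$, control $w-h$ by \Cref{lem:homogeneous-replacement}, apply \Cref{thm:homogeneous-estimate} to $h$ at radius $s/r$, and scale back. The differences are minor: you use an exact minimizing affine function instead of the paper's $\eta$-approximation, and you bound $\norm{h}_{L^\infty(B_1)}$ by the maximum principle rather than by $\norm{w}_{L^\infty(B_1)}+\norm{w-h}_{L^\infty(B_1)}$, which is why your constant $C$ comes out independent of $C_H$ (a harmless improvement over the stated dependence).
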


\begin{proof}
Fix \(0<s<r\le R\) and let \(\eta>0\) be arbitrary. Choose \(\ell\in\Acal\) such that
\[
        \norm{u-\ell}_{L^\infty(B_r(x_0))} \le \Adist{B_r(x_0)}{u}+\eta .
\]
For $y \in B_1$, we define $w(y) \coloneqq u(x_0+ry)-\ell(x_0+ry)$, which solves
\[
        G_r(D^2w)=g_r(y)
        \qquad\text{in }B_1,
\]
where
\begin{equation}\label{eq:scaled-G-g}
        G_r(M)
        \coloneqq
        r^2F(r^{-2}M),
        \qquad
        g_r(y)
        \coloneqq
        r^2f(x_0+ry).
\end{equation}
Let \(h\) be the $G_r$-homogeneous replacement of \(w\) in $B_1$. By \Cref{lem:homogeneous-replacement},
\begin{equation}\label{eq:w-h-two-scale}
        \norm{w-h}_{L^\infty(B_1)}
        \le
        C r^{2-\frac{n}{p}}\norm{f}_{L^p(B_r(x_0))} .
\end{equation}
Set \(\theta=s/r\). By \Cref{thm:homogeneous-estimate}, there is an affine function \(L\in\Acal\) such that
\[
        \norm{h-L}_{L^\infty(B_\theta)}
        \le
        C_H\theta^{1+\alpha_H}\norm{h}_{L^\infty(B_1)} + \eta.
\]
Since \(\norm{h}_{L^\infty(B_1)} \le \norm{w}_{L^\infty(B_1)} + \norm{w-h}_{L^\infty(B_1)}\), we obtain, using \(\theta^{1+\alpha_H}\le1\),
\[
\begin{aligned}
        \norm{w-L}_{L^\infty(B_\theta)}
        &\le
        C_H\theta^{1+\alpha_H}\norm{w}_{L^\infty(B_1)} + \eta
        +
        C r^{2-\frac{n}{p}}\norm{f}_{L^p(B_r(x_0))}\\
        &\le
        C_H\theta^{1+\alpha_H}\bigl(\Adist{B_r(x_0)}{u}+\eta\bigr)
        + \eta + 
        C r^{2-\frac{n}{p}}\norm{f}_{L^p(B_r(x_0))}.
\end{aligned}
\]
Rescaling back to $u$ and letting \(\eta\downarrow0\) proves \eqref{eq:two-scale-excess}.
\end{proof}

\section{Campanato-type iteration on logarithmic scales}\label{sec:iteration}

In this section, we prove the regularity estimate with the logarithmic loss for which the exponent $\alpha_H$, coming from the regularity of the homogeneous equation, satisfies
\[
    \alpha_H < 1 - \frac{n}{p},
\]
meaning that the forcing term is of higher order than the homogeneous \(C^{1,\alpha_H}\) scale.

\begin{theorem}\label{thm:pointwise-log}
Let
\begin{equation}\label{eq:Q-choice}
        1 < Q \leq \frac{2 - \frac{n}{p}}{1+\alpha_H},
\end{equation}
and choose a positive number \(m\) such that
\begin{equation}\label{eq:m-choice}
        \mu\coloneqq C_H Q^{-m}<1 .
\end{equation}
Let \(F\in\mathfrak{F}_{\lambda,\Lambda}\), and let \(u\in C(\overline{B_R(x_0)})\) be an \(L^p\)-viscosity solution of
\[
        F(D^2u)=f
        \qquad\text{in }B_R(x_0),
\]
with \(f\in L^p(B_R(x_0))\). Then, $u$ is differentiable at $x_0$, and 
\begin{equation}\label{eq:pointwise-log-taylor}
        \abs{u(x)-u(x_0)-Du(x_0)\cdot(x-x_0)}
        \le
        C\Theta_R(x_0)
        \abs{x-x_0}^{1+\alpha_H}
        \left(1+\log\frac{R}{\abs{x-x_0}}\right)^m
\end{equation}
for every \(x\in B_R(x_0)\). The constant \(C\) depends only on
\(n,p,\lambda,\Lambda,C_H,\alpha_H,Q,m\), where
\[
        \Theta_R(x_0)
        :=
        \frac{\dist_{B_R(x_0)}(u,\mathcal A)}{R^{1+\alpha_H}}
        +
        R^{1-\frac np-\alpha_H}
        \|f\|_{L^p(B_R(x_0))}.
\]
\end{theorem}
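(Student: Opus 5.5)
We would normalize first.
\begin{itemize}
\item Translate so that \(x_0=0\), and rescale by \(u_R(y)\coloneqq u(Ry)/R^{1+\alpha_H}\).
\item The rescaled function solves \(F_R(D^2u_R)=f_R\) in \(B_1\), with \(F_R(M)=R^{1-\alpha_H}F(R^{\alpha_H-1}M)\in\mathfrak F_{\lambda,\Lambda}\) and \(f_R(y)=R^{1-\alpha_H}f(Ry)\).
\item Then \(\|f_R\|_{L^p(B_1)}=R^{1-\frac np-\alpha_H}\|f\|_{L^p(B_R)}\) and \(\dist_{B_1}(u_R,\Acal)=\dist_{B_R}(u,\Acal)/R^{1+\alpha_H}\), so \(\Theta_R(x_0)\) becomes \(\Theta\coloneqq\dist_{B_1}(u_R,\Acal)+\|f_R\|_{L^p(B_1)}\).
\end{itemize}
It therefore suffices to take \(R=1\) and prove the estimate with \(\Theta\). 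We write \(E(r)\coloneqq\dist_{B_r}(u,\Acal)\) and \(L(r)=L_1(r)=1+\log(1/r)\).

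\textbf{Scale selection.} Choose the radii \(r_k\) so that the logarithmic weight grows geometrically: \(L(r_k)=Q^k\), i.e. \(r_k=e^{1-Q^k}\). Set the normalized excess
\[
e_k\coloneqq \frac{E(r_k)}{r_k^{1+\alpha_H}L(r_k)^m}.
\]
Apply \Cref{lem:two-scale} with \(s=r_{k+1}\), \(r=r_k\), and divide by \(r_{k+1}^{1+\alpha_H}Q^{(k+1)m}\). This gives
\[
e_{k+1}\le C_HQ^{-m}e_k + C\|f\|_{L^p(B_1)}\,\frac{r_k^{2-\frac np}}{r_{k+1}^{1+\alpha_H}}\,Q^{-(k+1)m}.
\]
The key computation is
\[
\frac{r_k^{2-\frac np}}{r_{k+1}^{1+\alpha_H}}=\exp\!\Big((1+\alpha_H)Q^{k+1}-\big(2-\tfrac np\big)Q^k+\big(2-\tfrac np\big)-(1+\alpha_H)\Big)\le e^{2},
\]
which holds exactly because of the choice \eqref{eq:Q-choice}. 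Hence \(e_{k+1}\le\mu e_k+C\Theta\) with \(\mu<1\) by \eqref{eq:m-choice}, and so \(\sup_k e_k\le C\Theta\), since \(e_0=E(1)\).

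\textbf{Intermediate radii.} For \(r_{k+1}\le s\le r_k\), apply \Cref{lem:two-scale} again with \(r=r_k\):
\[
E(s)\le C_H s^{1+\alpha_H}L(r_k)^m e_k+Cr_k^{2-\frac np}\Theta .
\]
The first term is bounded by \(C\Theta s^{1+\alpha_H}L(s)^m\), because \(L(r_k)\le L(s)\). For the second, the same computation gives \(r_k^{2-\frac np}\le e^2 r_{k+1}^{1+\alpha_H}\le e^2 s^{1+\alpha_H}\). Thus
\[
E(s)\le C\Theta\, s^{1+\alpha_H}L(s)^m\qquad\text{for all }0<s\le1 .
\]

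\textbf{Differentiability.} Let \(\ell_s\) be a near-optimal affine approximation on \(B_s\), and compare \(\ell_s\) with \(\ell_{s/2}\) on \(B_{s/2}\). Since \(\|\ell_s-\ell_{s/2}\|_{L^\infty(B_{s/2})}\le 2E(s)\) up to \(\eta\), the elementary bounds for affine functions give
\[
|\nabla\ell_s-\nabla\ell_{s/2}|\le C\Theta s^{\alpha_H}L(s)^m,\qquad |\ell_s(0)-\ell_{s/2}(0)|\le C\Theta s^{1+\alpha_H}L(s)^m .
\]
Along the dyadic radii \(s_j=2^{-j}s\) we have \(L(s_j)\le L(s)+j\log 2\), so
\[
\sum_{j\ge0}s_j^{\alpha_H}L(s_j)^m\le C s^{\alpha_H}L(s)^m ,
\]
with \(C\) depending on \(\alpha_H,m\); the analogous bound holds with exponent \(1+\alpha_H\). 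Consequently \(\ell_{s_j}\) converges to an affine function \(P=a+b\cdot x\). Because \(E(s)\to0\), we get \(a=u(0)\); moreover \(|u-P|\le C\Theta s^{1+\alpha_H}L(s)^m\) on \(B_s\). This shows that \(u\) is differentiable at \(0\) with \(Du(0)=b\), and taking \(s=|x|\) yields \eqref{eq:pointwise-log-taylor}. Finally, undo the rescaling.

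The main obstacle is the choice of radii in the scale-selection step: the ratio \(r_{k+1}/r_k\) must shrink fast enough that the weight \(L^m\) gains the factor \(Q^{m}\) needed to beat \(C_H\), yet slowly enough that the source term \(r_k^{2-n/p}\) stays below \(r_{k+1}^{1+\alpha_H}\). The geometric growth \(L(r_k)=Q^k\) reconciles these two requirements precisely when \(Q(1+\alpha_H)\le 2-\frac np\), which is exactly condition \eqref{eq:Q-choice}.
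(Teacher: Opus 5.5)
Your proposal is correct and follows the paper's proof essentially step by step. It uses the same scale selection $L(r_k)=Q^k$ and the same normalized excess contracting with factor $\mu=C_HQ^{-m}$. It uses the condition $Q(1+\alpha_H)\le 2-\frac np$ in the same way, to keep $r_k^{2-n/p}/r_{k+1}^{1+\alpha_H}$ bounded both in the recursion and at intermediate radii. It also builds $P_{x_0}$ from the same dyadic affine chain.

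The only differences are cosmetic. You rescale to $R=1$ first, whereas the paper carries the factor $R^{1-\frac np-\alpha_H}$ explicitly. You also start the dyadic chain at an arbitrary $s$ rather than at $R$. This is harmless: the bound holds at every later scale $2^{-j}s$, so $u$ is differentiable at $0$, and the limit slope equals $Du(0)$ by uniqueness of the derivative, independently of $s$.
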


\begin{proof}
We show that there exists an affine function
\[
        P_{x_0}(x)=u(x_0)+p_{x_0}\cdot(x-x_0)
\]
such that, for every \(0<s\le R\),
\begin{equation}\label{eq:pointwise-log-estimate}
        \sup_{B_s(x_0)}\abs{u-P_{x_0}}
        \le
        C\Theta_R(x_0) s^{1+\alpha_H} L_R(s)^m.
\end{equation}
We first prove the excess decay
\begin{equation}\label{eq:excess-log-bound-claim}
        \Adist{B_s(x_0)}{u}
        \le
        C\Theta_R(x_0)s^{1+\alpha_H}L_R(s)^m
        \qquad\text{for every }0<s\le R .
\end{equation}
Let
\[
        F_0\coloneqq \norm{f}_{L^p(B_R(x_0))},
        \qquad
        L(s)\coloneqq L_R(s)=1+\log\frac{R}{s}.
\]
The logarithmic scale is chosen to compensate for the noncontractive
homogeneous constant without destroying the higher-order decay of the forcing
term.  If \(L_R(r_{k+1})=Q L_R(r_k)\), then the normalization
\(r^{1+\alpha_H}L_R(r)^m\) produces the factor \(Q^{-m}\) in the homogeneous
part, while the admissibility condition
\[
        Q\le \frac{2-\frac np}{1+\alpha_H}
\]
keeps the perturbative term bounded under the same normalization.  Accordingly,
we define \((r_k)_{k\ge0}\) by
\[
        L_R(r_k)=Q^k,
        \qquad\text{equivalently}\qquad
        r_k=R\exp(1-Q^k).
\]
Thus \(r_0=R\), \(L(r_{k+1})=QL(r_k)\), and since $Q>1$, we have \(r_k\downarrow0\). Set
\[
        E_k\coloneqq \Adist{B_{r_k}(x_0)}{u},
        \qquad
        A_k\coloneqq \frac{E_k}{r_k^{1 + \alpha_H}L(r_k)^m}.
\]
Applying \Cref{lem:two-scale} with \(s=r_{k+1}\) and \(r=r_k\), we get
\[
        E_{k+1}
        \le
        C_H\left(\frac{r_{k+1}}{r_k}\right)^{1+\alpha_H}E_k
        +
        C r_k^{2-\frac{n}{p}}F_0.
\]
Dividing by \(r_{k+1}^{1+\alpha_H}L(r_{k+1})^m\) gives
\[
        A_{k+1}
        \le
        C_H\left(\frac{L(r_k)}{L(r_{k+1})}\right)^m A_k
        +
        C F_0\frac{r_k^{2-\frac{n}{p}}}{r_{k+1}^{1+\alpha_H}L(r_{k+1})^m}.
\]
The first coefficient is \(C_HQ^{-m}=\mu<1\). For the forcing term, write \(L_k=L(r_k)=Q^k\). Since
\[
        r_k=Re^{1-L_k},
        \qquad
        r_{k+1}=Re^{1-QL_k},
\]
we have
\[
\begin{aligned}
        \frac{r_k^{2-\frac{n}{p}}}{r_{k+1}^{1+\alpha_H}L(r_{k+1})^m}
        &=
        R^{1-\frac{n}{p}-\alpha_H}
        \frac{\exp\left\{\left(2-\frac{n}{p}\right)(1-L_k)-(1+\alpha_H)(1-QL_k)\right\}}{(QL_k)^m} \\
        &=
        R^{1-\frac{n}{p}-\alpha_H}
        \frac{\exp\left\{1-\frac{n}{p}-\alpha_H-\left(2-\frac{n}{p}-(1+\alpha_H)Q\right)L_k\right\}}{(QL_k)^m}.
\end{aligned}
\]
Because 
\[
	2-\frac{n}{p}-(1+\alpha_H)Q \geq 0,
\]
this expression is bounded by \(CR^{1-\frac{n}{p}-\alpha_H}\), with \(C=C(\alpha_H,Q,m)\). Hence
\[
        A_{k+1}
        \le
        \mu A_k
        +
        CR^{1-\frac{n}{p}-\alpha_H}F_0.
\]
Since $\mu<1$, iteration gives
\[
	    A_k
        \le
        C\left(A_0+R^{1-\frac{n}{p}-\alpha_H}F_0\right)
        \le
        C\Theta_R(x_0)
        \qquad\text{for every }k\ge 0.
\]
Therefore
\begin{equation}\label{eq:Ek-bound-log}
        E_k
        \le
        C\Theta_R(x_0)r_k^{1+\alpha_H}L(r_k)^m.
\end{equation}
Now let \(0<s\le R\), and choose \(k\ge0\) such that \(r_{k+1}<s\le r_k\). By \Cref{lem:two-scale},
\[
        \Adist{B_s(x_0)}{u}
        \le
        C_H\left(\frac{s}{r_k}\right)^{1+\alpha_H}E_k
        +
        C r_k^{2-\frac{n}{p}}F_0.
\]
Using \eqref{eq:Ek-bound-log} and \(L(s)\ge L(r_k)\), the first term is at most \(C\Theta_R(x_0)s^{1+\alpha_H}L(s)^m\). For the forcing term, since \(s>r_{k+1}\) and \(L(s)\ge1\),
\[
        \frac{r_k^{2-\frac{n}{p}}}{s^{1+\alpha_H}L(s)^m}
        \le
        \frac{r_k^{2-\frac{n}{p}}}{r_{k+1}^{1+\alpha_H}}
        =
        R^{1-\frac{n}{p}-\alpha_H}\exp\left\{1-\frac{n}{p}-\alpha_H-\left(2-\frac{n}{p}-(1+\alpha_H)Q\right)L_k\right\}
        \le
        CR^{1-\frac{n}{p}-\alpha_H}.
\]
Thus, \eqref{eq:excess-log-bound-claim} is proved since
\[
        Cr_k^{2-\frac{n}{p}}F_0
        \le
        C\Theta_R(x_0)s^{1+\alpha_H}L(s)^m.
\]
To obtain \eqref{eq:pointwise-log-estimate}, let
\[
        d_j\coloneqq 2^{-j}R, \qquad j=0,1,2,\dots.
\]
For each \(j\), choose an affine function
\[
        \ell_j(x) = c_j+p_j\cdot(x-x_0)
\]
such that
\begin{equation}\label{eq:ellj-choice}
        \norm{u-\ell_j}_{L^\infty(B_{d_j}(x_0))}
        \le
        2C\Theta_R(x_0)d_j^{1+\alpha_H}L_R(d_j)^m.
\end{equation}
Comparing \(\ell_{j+1}\) and \(\ell_j\) on \(B_{d_{j+1}}(x_0)\), and using \eqref{eq:ellj-choice} at levels \(j\) and \(j+1\), gives
\[
        \norm{\ell_{j+1}-\ell_j}_{L^\infty(B_{d_{j+1}}(x_0))}
        \le
        C\Theta_R(x_0)d_j^{1+\alpha_H}L_R(d_j)^m.
\]
For affine functions, the elementary estimate
\[
        \norm{q}_{L^\infty(B_\varrho(x_0))}\le\delta
        \quad\Longrightarrow\quad
        \abs{q(x_0)}\le\delta,
        \qquad
        \abs{Dq}\le C(n)\frac{\delta}{\varrho}
\]
gives
\begin{equation}\label{eq:affine-increments-log}
        \abs{c_{j+1}-c_j}
        \le
        C\Theta_R(x_0)d_j^{1+\alpha_H}L_R(d_j)^m,
\end{equation}
and
\begin{equation}\label{eq:slope-increments-log}
        \abs{p_{j+1}-p_j}
        \le
        C\Theta_R(x_0)d_j^{\alpha_H}L_R(d_j)^m.
\end{equation}
Since \(\alpha_H>0\), we obtain
\[
        \sum_{j=0}^\infty d_j^{\alpha_H}L_R(d_j)^m<\infty.
\]
Hence \((c_j)\) and \((p_j)\) are Cauchy sequences, and therefore convergent. Let
\[
        c_\infty=\lim_{j\to\infty}c_j,
        \qquad
        p_\infty=\lim_{j\to\infty}p_j.
\]
Since \(x_0\in B_{d_j}(x_0)\), \eqref{eq:ellj-choice} gives
\[
        \abs{u(x_0)-c_j}
        \le
        C\Theta_R(x_0)d_j^{1+\alpha_H}L_R(d_j)^m
        \longrightarrow 0.
\]
Thus \(c_\infty=u(x_0)\). Define
\[
        P_{x_0}(x)
        \coloneqq
        u(x_0) + p_\infty\cdot(x-x_0).
\]
The tails of \eqref{eq:affine-increments-log} and \eqref{eq:slope-increments-log} satisfy
\[
        \abs{c_j-u(x_0)}
        \le
        C\Theta_R(x_0)d_j^{1+\alpha_H}L_R(d_j)^m,
        \qquad
        \abs{p_j-p_\infty}
        \le
        C\Theta_R(x_0)d_j^{\alpha_H}L_R(d_j)^m.
\]
Consequently, for \(x\in B_{d_j}(x_0)\),
\[
\begin{aligned}
        \abs{\ell_j(x)-P_{x_0}(x)}
        &\le
        \abs{c_j-u(x_0)}+
        \abs{p_j-p_\infty}\,\abs{x-x_0} \\
        &\le
        C\Theta_R(x_0)d_j^{1+\alpha_H}L_R(d_j)^m.
\end{aligned}
\]
Combining this with \eqref{eq:ellj-choice}, we obtain
\[
        \sup_{B_{d_j}(x_0)}\abs{u-P_{x_0}}
        \le
        C\Theta_R(x_0)d_j^{1+\alpha_H}L_R(d_j)^m.
\]
For arbitrary \(0<s\le R\), choose \(j\) such that \(d_{j+1}<s\le d_j\). Since \(d_j\le2s\) and \(L_R(d_j)\le L_R(s)\), this gives \eqref{eq:pointwise-log-estimate}.

Finally, \(s^{1+\alpha_H}L_R(s)^m=o(s)\) as \(s\downarrow0\). Hence \eqref{eq:pointwise-log-estimate} implies differentiability at \(x_0\), and \(p_\infty=Du(x_0)\). The pointwise estimate \eqref{eq:pointwise-log-taylor} follows by taking \(s=\abs{x-x_0}\).
\end{proof}

\begin{remark}[Quantitative homogeneous input]
The proof of Theorem~\ref{thm:pointwise-log} uses the homogeneous theory only
through the affine approximation estimate
\[
        \mathrm{dist}_{B_\rho}(h,\mathcal A)
        \le
        C_H\rho^{1+\alpha_H}\|h\|_{L^\infty(B_1)}.
\]
Consequently, the same proof applies verbatim to any class of operators for
which one has a quantitative homogeneous estimate
\[
        \mathrm{dist}_{B_\rho}(h,\mathcal A)
        \le
        C_*\rho^{1+\alpha_*}\|h\|_{L^\infty(B_1)},
        \qquad 0<\rho<1,
\]
with explicit constants $\alpha_*\in(0,1)$ and $C_*\ge1$.  If
$\alpha_*<1-n/p$, then for every
\[
        1<Q\le \frac{2-\frac np}{1+\alpha_*}
\]
and every $m>0$ such that $C_*Q^{-m}<1$, the conclusion of
Theorem~\ref{thm:pointwise-log} holds with $\alpha_H$ and $C_H$ replaced by
$\alpha_*$ and $C_*$.  Thus any quantitative improvement in the homogeneous
regularity theory propagates directly to an endpoint differentiability modulus
for the inhomogeneous equation.
\end{remark}

As is often the case in Caffarelli's geometric approach to regularity theory,
\Cref{thm:pointwise-log} is fundamentally a pointwise expansion theorem: at a
fixed point, it quantifies the growth of the error after subtracting the
appropriate affine approximation.  In the present problem, this pointwise
estimate is not confined to a single base point; it holds uniformly throughout
compact subsets of the domain.  This uniformity is what converts the endpoint
growth estimate into a local continuity modulus for the gradient.

The underlying passage from uniform pointwise affine expansions to a two-point
estimate for the gradient is a standard device in the field, although it is
often left implicit.  We isolate it here as a general lemma, both for
completeness and to make clear that no additional PDE input is involved in
this final step.

\begin{definition}
A modulus of continuity is a continuous nondecreasing function
\[
        \omega:[0,R_*]\to[0,\infty)
\]
such that $\omega(0)=0$ and $\omega(r)>0$ for every $r>0$.  If
$K\subset\Omega$ and $v:K\to\mathbb R^n$, we write
\[
        [v]_{C^{0,\omega}(K)}
        :=
        \sup_{\substack{x,z\in K\\ x\neq z}}
        \frac{|v(x)-v(z)|}{\omega(|x-z|)}.
\]
We say that $u\in C^{1,\omega}_{\mathrm{loc}}(\Omega)$ if
$u$ is differentiable in $\Omega$ and, for every $K\Subset\Omega$,
\[
        [Du]_{C^{0,\omega}(K)}<\infty .
\]
\end{definition}

\begin{lemma}[Pointwise affine expansions imply a gradient modulus]
\label{lem:pointwise-to-gradient-modulus}
Let $\Omega\subset\mathbb R^n$ be open, let $\Omega'\Subset\Omega$ and let \(R_0>0\) be such that \(B_{R_0}(y)\subset\Omega\) for every
\(y\in\Omega'\). Let $\omega$ be a modulus of continuity on $[0,\diam(\Omega)]$.  Suppose that
$u\in L^\infty(\Omega)$ and that there exists $M\ge0$ with the following
property: for every $y\in\Omega'$ there is a vector $p_y\in\mathbb R^n$ such
that
\begin{equation}\label{eq:abstract-pointwise-expansion}
        \sup_{B_\rho(y)}
        |u(x)-u(y)-p_y\cdot(x-y)|
        \le
        M\rho\,\omega(\rho)
\end{equation}
for every $0<\rho\le R_0$.

Then $u$ is differentiable at every point of $\Omega'$, with
\[
        Du(y)=p_y,
        \qquad y\in\Omega',
\]
and, for every $x,z\in\Omega'$ with $0<|x-z|<R_0/4$,
\begin{equation}\label{eq:small-gradient-modulus}
        |Du(x)-Du(z)|
        \le
        C_n M\,\omega(|x-z|).
\end{equation}
Consequently, $Du\in C^{0,\omega}(\Omega')$ and
\begin{equation}\label{eq:global-gradient-modulus}
        [Du]_{C^{0,\omega}(\Omega')}
        \le
        C_n M
        +
        \frac{C_n}{\omega(R_0/4)}
        \left(
        \frac{\|u\|_{L^\infty(\Omega)}}{R_0}
        +
        M\omega(R_0)
        \right).
\end{equation}
In particular, if \eqref{eq:abstract-pointwise-expansion} holds locally in
$\Omega$ for every $\Omega'\Subset\Omega$, then
$u\in C^{1,\omega}_{\mathrm{loc}}(\Omega)$.
\end{lemma}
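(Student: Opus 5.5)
Differentiability is immediate from the expansion. Since $\omega(\rho)\to0$ as $\rho\downarrow0$, \eqref{eq:abstract-pointwise-expansion} gives $u(x)=u(y)+p_y\cdot(x-y)+o(|x-y|)$. Hence $u$ is differentiable at $y$ with $Du(y)=p_y$. The main work is the two-point estimate \eqref{eq:small-gradient-modulus}, which I would prove by comparing the two affine expansions on a common ball.

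Fix $x,z\in\Omega'$ with $d:=|x-z|<R_0/4$ and set $\rho:=2d\le R_0$. Then $B_d(x)\subset B_\rho(x)\cap B_\rho(z)$. Writing $P_x(w)=u(x)+p_x\cdot(w-x)$ and $P_z$ similarly, \eqref{eq:abstract-pointwise-expansion} at both centers gives
\[
        \sup_{B_d(x)}|P_x-P_z|\le \sup_{B_\rho(x)}|u-P_x|+\sup_{B_\rho(z)}|u-P_z|\le 2M\rho\,\omega(\rho).
\]
The difference $q=P_x-P_z$ is affine. The elementary estimate already used in the proof of \Cref{thm:pointwise-log} then yields $|Dq|\le C(n)\,2M\rho\,\omega(\rho)/d=4C(n)M\omega(2d)$.

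This gives $\omega(2d)$ rather than $\omega(d)$, and I expect this to be the only delicate point, since a general modulus need not be doubling. To avoid it, I would instead compare both expansions on a ball of radius comparable to $d$ with the expansions evaluated at radius exactly $d$. Take $\rho=d$ at both centers and use the half-lens $B_d(x)\cap B_d(z)$, which contains a ball of radius $d/2$ centered at the midpoint $(x+z)/2$. Applying the affine estimate on that ball of radius $d/2$ gives
\[
        |p_x-p_z|\le C(n)\,\frac{2Md\,\omega(d)}{d/2}=C_nM\,\omega(|x-z|).
\]
This proves \eqref{eq:small-gradient-modulus}, using only the monotonicity of $\omega$.

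For the global bound \eqref{eq:global-gradient-modulus}, I would treat pairs with $|x-z|\ge R_0/4$ separately. It suffices to bound $|p_y|$ uniformly. Apply \eqref{eq:abstract-pointwise-expansion} with $\rho=R_0$: on $B_{R_0}(y)$ we get $|p_y\cdot(w-y)|\le 2\|u\|_{L^\infty(\Omega)}+MR_0\,\omega(R_0)$. Choosing $w=y+R_0\,p_y/|p_y|$ (or a nearby interior point) gives $|p_y|\le C(2\|u\|_\infty/R_0+M\omega(R_0))$. Therefore $|Du(x)-Du(z)|\le 2\sup|p_y|$. Dividing by $\omega(|x-z|)\ge\omega(R_0/4)$ and combining with the small-distance case yields \eqref{eq:global-gradient-modulus}.

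The final statement follows by applying this bound on each $\Omega'\Subset\Omega$ with a suitable $R_0=\tfrac12\dist(\Omega',\partial\Omega)$.
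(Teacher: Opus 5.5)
Your proof is correct, and for the key small-distance estimate \eqref{eq:small-gradient-modulus} it takes a different and shorter route than the paper.

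The paper works with point evaluations. It evaluates the expansion at $x$ at the point $z$, and vice versa, to bound the component of $Du(x)-Du(z)$ along $\nu=(z-x)/|z-x|$. It then introduces the auxiliary points $x_i=\frac{x+z}{2}+\frac{\sqrt3}{2}d\,\nu_i$, equidistant from $x$ and $z$, to bound each component along an orthonormal complement $\nu_2,\dots,\nu_n$, and finally sums over the basis.

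You instead compare the two affine approximations $P_x,P_z$ directly on the ball $B_{d/2}\bigl(\tfrac{x+z}{2}\bigr)\subset B_d(x)\cap B_d(z)$. A single application of the elementary affine estimate then gives $|p_x-p_z|\le 4M\omega(d)$, with a dimension-free constant. (The set $B_d(x)\cap B_d(z)$ is a full lens rather than a half-lens, but the inclusion you use is correct.)

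Your version also has a small technical advantage. The ball $B_{d/2}\bigl(\tfrac{x+z}{2}\bigr)$ lies strictly inside both open balls of radius $d$, so you apply the hypothesis exactly at $\rho=d$ and need only the monotonicity of $\omega$. The paper's evaluations at points at distance exactly $d$ lie on $\partial B_d$. Strictly speaking, they require letting $\rho\downarrow d$ and invoking the continuity of $\omega$ built into the definition of a modulus. The paper's componentwise argument uses the expansions only at finitely many explicit points, but it gains nothing beyond that here.

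Your reason for discarding the first attempt with $\rho=2d$ is exactly right: a general modulus need not be doubling. Your treatment of the range $|x-z|\ge R_0/4$ coincides with the paper's: a uniform bound on $|p_y|$ from the expansion at $\rho=R_0$, evaluated slightly inside the ball.

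One cosmetic point concerns your last sentence. $R_0$ should be the radius supplied by the local hypothesis on $\Omega'$, shrunk if necessary so that $B_{R_0}(y)\subset\Omega$, rather than a radius you choose freely.
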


\begin{proof}
First, \eqref{eq:abstract-pointwise-expansion} implies differentiability at
$y$.  Indeed,
\[
        \frac{|u(x)-u(y)-p_y\cdot(x-y)|}{|x-y|}
        \le
        M\omega(|x-y|)
        \longrightarrow 0
        \qquad\text{as }x\to y.
\]
Thus $Du(y)=p_y$ for every $y\in\Omega'$.

We now prove the modulus estimate.  Let $x,z\in\Omega'$ and put
\[
        d:=|x-z|.
\]
Assume first that $0<d<R_0/4$, and set
\[
        \nu:=\frac{z-x}{|z-x|}.
\]
Applying \eqref{eq:abstract-pointwise-expansion} at $x$ and evaluating at $z$
gives
\[
        u(z)
        =
        u(x)+Du(x)\cdot(z-x)
        +
        O\bigl(Md\,\omega(d)\bigr).
\]
Similarly, applying \eqref{eq:abstract-pointwise-expansion} at $z$ and
evaluating at $x$ gives
\[
        u(x)
        =
        u(z)+Du(z)\cdot(x-z)
        +
        O\bigl(Md\,\omega(d)\bigr).
\]
Adding these two identities yields
\begin{equation}\label{eq:abstract-normal-component}
        |(Du(x)-Du(z))\cdot\nu|
        \le
        C M\omega(d).
\end{equation}

To estimate the remaining components, choose unit vectors
$\nu_2,\ldots,\nu_n$ so that
\[
        \{\nu,\nu_2,\ldots,\nu_n\}
\]
is an orthonormal basis of $\mathbb R^n$.  For each $i=2,\ldots,n$, define
\[
        x_i
        :=
        \frac{x+z}{2}
        +
        \frac{\sqrt3}{2}\,d\,\nu_i .
\]
Then
\[
        |x_i-x|=|x_i-z|=d.
\]
Since $d<R_0/4$, all the points involved lie in $\Omega$ and the estimates
\eqref{eq:abstract-pointwise-expansion} may be applied at both $x$ and $z$.

Applying \eqref{eq:abstract-pointwise-expansion} at $x$ and at $z$, both
evaluated at $x_i$, gives
\begin{align*}
        u(x_i)
        &=
        u(x)+Du(x)\cdot(x_i-x)
        +
        O\bigl(Md\,\omega(d)\bigr),                                      \\
        u(x_i)
        &=
        u(z)+Du(z)\cdot(x_i-z)
        +
        O\bigl(Md\,\omega(d)\bigr).
\end{align*}
Subtracting the two identities, we obtain
\begin{align}
        0
        &=
        u(x)-u(z)
        +
        Du(x)\cdot(x_i-x)
        -
        Du(z)\cdot(x_i-z)
        +
        O\bigl(Md\,\omega(d)\bigr).
        \label{eq:abstract-tangential-raw}
\end{align}
Since
\[
        x_i-x=\frac d2\nu+\frac{\sqrt3 d}{2}\nu_i,
        \qquad
        x_i-z=-\frac d2\nu+\frac{\sqrt3 d}{2}\nu_i,
\]
we rewrite \eqref{eq:abstract-tangential-raw} as
\begin{align*}
        0
        &=
        u(x)-u(z)
        +
        \frac d2(Du(x)+Du(z))\cdot\nu
        +
        \frac{\sqrt3 d}{2}(Du(x)-Du(z))\cdot\nu_i
        \\
        &\qquad
        +
        O\bigl(Md\,\omega(d)\bigr).
\end{align*}
On the other hand, the two expansions at $x$ and $z$ evaluated at each other
also imply
\[
        u(x)-u(z)
        +
        \frac d2(Du(x)+Du(z))\cdot\nu
        =
        O\bigl(Md\,\omega(d)\bigr).
\]
Therefore,
\begin{equation}\label{eq:abstract-tangential-component}
        |(Du(x)-Du(z))\cdot\nu_i|
        \le
        C M\omega(d)
\end{equation}
for every $i=2,\ldots,n$.

Combining \eqref{eq:abstract-normal-component} and
\eqref{eq:abstract-tangential-component}, and using that
$\{\nu,\nu_2,\ldots,\nu_n\}$ is an orthonormal basis, we obtain
\[
        |Du(x)-Du(z)|
        \le
        C_n M\omega(|x-z|)
\]
whenever $0<|x-z|<R_0/4$.  This proves
\eqref{eq:small-gradient-modulus}.

It remains only to handle pairs with $|x-z|\ge R_0/4$.  We first record a
uniform bound for the gradient.  Fix $y\in\Omega'$ and $e\in\mathbb S^{n-1}$.
Taking $\rho=R_0$ in \eqref{eq:abstract-pointwise-expansion} and evaluating
the expansion at $y+R_0e$, we get
\[
        R_0|Du(y)\cdot e|
        \le
        |u(y+R_0e)-u(y)|
        +
        MR_0\omega(R_0).
\]
Hence
\[
        \|Du\|_{L^\infty(\Omega')}
        \le
        C_n\left(
        \frac{\|u\|_{L^\infty(\Omega)}}{R_0}
        +
        M\omega(R_0)
        \right).
\]
If now $|x-z|\ge R_0/4$, then, since $\omega$ is nondecreasing,
\[
        \omega(|x-z|)
        \ge
        \omega(R_0/4).
\]
Therefore
\[
        \frac{|Du(x)-Du(z)|}{\omega(|x-z|)}
        \le
        \frac{C_n}{\omega(R_0/4)}
        \left(
        \frac{\|u\|_{L^\infty(\Omega)}}{R_0}
        +
        M\omega(R_0)
        \right).
\]
Together with the small-distance estimate, this proves
\eqref{eq:global-gradient-modulus}.
\end{proof}

As a corollary, we have the following result.

\begin{corollary}\label{cor:interior-log-holder}
Let \(Q\) and \(m\) satisfy \eqref{eq:Q-choice}--\eqref{eq:m-choice}.
Let \(F\in\mathfrak{F}_{\lambda,\Lambda}\), and let \(u\) be an
\(L^p\)-viscosity solution of
\[
        F(D^2u)=f
        \quad\text{in } \Omega,
\]
with \(f\in L^p(\Omega)\). Then \(u\) is differentiable in \(\Omega\).
Moreover, for every \(\Omega'\Subset\Omega\), set
\[
        \rho_{\Omega'}:=
        \min\left\{
        1,\frac14\dist(\Omega',\partial\Omega)
        \right\},
        \qquad
        D_{\Omega'}:=\diam(\Omega')+2\rho_{\Omega'} ,
\]
with the convention that \(\dist(\Omega',\partial\Omega)=+\infty\) if
\(\Omega=\mathbb R^n\).  Then there exists
\[
        C
        =
        C\left(
        n,p,\lambda,\Lambda,C_H,\alpha_H,Q,m,\rho_{\Omega'}^{-1}
        \right)
\]
such that
\begin{equation}\label{eq:gradient-log-holder}
        \abs{Du(x)-Du(z)}
        \le
        C\left[
        \norm{u}_{L^\infty(\Omega)}
        +
        \norm{f}_{L^p(\Omega)}
        \right]
        \abs{x-z}^{\alpha_H}
        \left(
        1+\log\frac{D_{\Omega'}}{\abs{x-z}}
        \right)^m
\end{equation}
for every \(x,z\in\Omega'\), \(x\neq z\).
\end{corollary}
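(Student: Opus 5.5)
The plan is to combine \Cref{thm:pointwise-log}, applied uniformly at every base point of $\Omega'$ with a fixed radius, with \Cref{lem:pointwise-to-gradient-modulus}. First I fix $\Omega'\Subset\Omega$ and set $R:=\rho_{\Omega'}\le1$. For every $y\in\Omega'$ the ball $\overline{B_{4R}(y)}\subset\Omega$. For each such $y$ I apply \Cref{thm:pointwise-log} on $B_R(y)$. This gives differentiability at $y$; since $\Omega'$ is arbitrary, it gives differentiability in all of $\Omega$. It also gives, for $0<\rho\le R$,
\[
        \sup_{B_\rho(y)}\abs{u-u(y)-Du(y)\cdot(x-y)}\le C\Theta_R(y)\,\rho^{1+\alpha_H}\Bigl(1+\log\frac R\rho\Bigr)^m .
\]
Next I bound $\Theta_R(y)$ uniformly. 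Using the constant affine function $0$, $\Adist{B_R(y)}{u}\le\norm{u}_{L^\infty(\Omega)}$. Hence
\[
        \Theta_R(y)\le R^{-1-\alpha_H}\norm{u}_{L^\infty(\Omega)}+R^{1-\frac np-\alpha_H}\norm{f}_{L^p(\Omega)}\le C(\rho_{\Omega'}^{-1})\bigl[\norm{u}_{L^\infty(\Omega)}+\norm{f}_{L^p(\Omega)}\bigr],
\]
where $R\le1$ and $1-\frac np-\alpha_H>0$ make the second power at most $1$. So $M:=C[\norm{u}_\infty+\norm{f}_p]$ serves uniformly in $y$. If $u$ is unbounded on $\Omega$ the estimate is vacuous, so we may assume $u\in L^\infty(\Omega)$.

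The main obstacle is putting the weight into the form $\rho\,\omega(\rho)$ with $\omega$ a genuine modulus on $[0,\diam\Omega]$, and switching the normalization from $R$ to $D_{\Omega'}$. I take
\[
        \omega(r):=r^{\alpha_H}\Bigl(1+\log\frac{D_{\Omega'}}{r}\Bigr)^m .
\]
Since $R\le D_{\Omega'}$, we have $1+\log(R/\rho)\le1+\log(D_{\Omega'}/\rho)$, so the expansion above holds with this $\omega$ for $\rho\le R_0:=R$. Monotonicity fails for $r$ large: the derivative of $\log\omega$ is $\bigl(\alpha_H-m/(1+\log(D_{\Omega'}/r))\bigr)/r$, which is positive iff $1+\log(D_{\Omega'}/r)>m/\alpha_H$. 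I would handle this by replacing $\omega$ with the increasing function $\tilde\omega(r):=\sup_{0<t\le r}\omega(t)$. This is comparable to $\omega$ with constants depending only on $\alpha_H,m$, since below the critical radius $\omega$ is already increasing and the critical radius is a fixed multiple $e^{1-m/\alpha_H}$ of $D_{\Omega'}$. It is also continuous and nondecreasing, and extends to $[0,\diam\Omega]$ as a modulus. Alternatively, I can apply \Cref{lem:pointwise-to-gradient-modulus} directly to pairs with $|x-z|<R/4$, where the only monotonicity used is trivial.

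Finally I invoke \Cref{lem:pointwise-to-gradient-modulus} with $R_0=R$. For $|x-z|<R/4$ it gives $\abs{Du(x)-Du(z)}\le C_nM\,\tilde\omega(|x-z|)\le CM\,\omega(|x-z|)$. For $|x-z|\ge R/4$ I use the global bound \eqref{eq:global-gradient-modulus}. There the quantity $\tilde\omega(R/4)^{-1}$ is bounded by a constant depending on $\rho_{\Omega'}^{-1}$, because $\omega(R/4)\ge(R/4)^{\alpha_H}$. Also $M\omega(R)$ is controlled: $R/D_{\Omega'}$ is at most $1/2$, so $\omega(R)$ involves $D_{\Omega'}$ only logarithmically, and $\omega(R)/\omega(R/4)$ is bounded by a constant depending on $\alpha_H,m$ alone, which absorbs the $D_{\Omega'}$ dependence. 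Collecting these gives \eqref{eq:gradient-log-holder} with $C$ depending only on the listed quantities.
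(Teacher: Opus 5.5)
Your proposal is correct and follows the paper's proof. You apply \Cref{thm:pointwise-log} at every $y\in\Omega'$ on a ball of radius comparable to $\rho_{\Omega'}$ (the paper uses $B_{2\rho_{\Omega'}}(y)$, you use $B_{\rho_{\Omega'}}(y)$), bound $\Theta$ uniformly by $C(\rho_{\Omega'}^{-1})[\norm{u}_{L^\infty(\Omega)}+\norm{f}_{L^p(\Omega)}]$, enlarge the logarithm's numerator to $D_{\Omega'}$, pass to the nondecreasing envelope, and invoke \Cref{lem:pointwise-to-gradient-modulus}. Your explicit checks also fill in details the paper leaves implicit: the envelope is comparable to $\omega$ with constants depending only on $\alpha_H,m$, and the large-distance range is absorbed with no dependence on $D_{\Omega'}$, via $\omega(R/4)\ge (R/4)^{\alpha_H}$ and $\omega(R)\le 4^{\alpha_H}\omega(R/4)$.
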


\begin{proof}
Let
\[
        M:=\norm{u}_{L^\infty(\Omega)}+\norm{f}_{L^p(\Omega)}.
\]
By the definition of \(\rho_{\Omega'}\), we have
\[
        B_{2\rho_{\Omega'}}(y)\Subset\Omega
        \qquad\text{for every } y\in\Omega'.
\]
Applying Theorem~\ref{thm:pointwise-log} in each ball
\(B_{2\rho_{\Omega'}}(y)\), \(y\in\Omega'\), we obtain a constant
\[
        C
        =
        C\left(
        n,p,\lambda,\Lambda,C_H,\alpha_H,Q,m,\rho_{\Omega'}^{-1}
        \right)
\]
such that, for every \(y\in\Omega'\) and every
\(0<\varrho\le \rho_{\Omega'}\),
\[
        \sup_{B_\varrho(y)}
        \abs{u(x)-u(y)-Du(y)\cdot(x-y)}
        \le
        CM\varrho^{1+\alpha_H}
        \left(
        1+\log\frac{D_{\Omega'}}{\varrho}
        \right)^m .
\]
Indeed, the quantity \(\Theta_{2\rho_{\Omega'}}(y)\) appearing in
Theorem~\ref{thm:pointwise-log} is bounded by \(CM\), uniformly for
\(y\in\Omega'\), after allowing \(C\) to depend on
\(\rho_{\Omega'}^{-1}\).  Moreover,
\[
        2\rho_{\Omega'}\le D_{\Omega'},
\]
so the logarithmic factor with numerator \(2\rho_{\Omega'}\) is controlled by
the one displayed above.

Let
\[
        \omega_0(\varrho)
        :=
        \varrho^{\alpha_H}
        \left(
        1+\log\frac{D_{\Omega'}}{\varrho}
        \right)^m .
\]
If necessary, replace \(\omega_0\) by its nondecreasing envelope on
\([0,D_{\Omega'}]\).  This changes the estimates only by a multiplicative
constant depending on \(\alpha_H\) and \(m\).  Applying
Lemma~\ref{lem:pointwise-to-gradient-modulus} gives
\[
        \abs{Du(x)-Du(z)}
        \le
        CM
        \abs{x-z}^{\alpha_H}
        \left(
        1+\log\frac{D_{\Omega'}}{\abs{x-z}}
        \right)^m
\]
for every \(x,z\in\Omega'\), \(x\neq z\), after increasing the constant to
account for the fixed large-distance range.  This is the desired estimate.
\end{proof}

\section{The borderline case \texorpdfstring{$\alpha_H=1-\frac np$}{alphaH = 1 - n/p}}
\label{sec:borderline-equality}

We now turn to the critical threshold case, $\alpha_H=1-\frac np$. At this scale the logarithmic selection used in the strict regime no longer
closes.  Indeed, there is no admissible \(Q>1\) satisfying
\[
        Q\le \frac{2-\frac np}{1+\alpha_H}.
\]
The obstruction is not merely technical: along the logarithmic scales
\(L(r_{k+1})=QL(r_k)\), the perturbative term grows exponentially relative to
any fixed logarithmic normalization. No finite power of the logarithm can compensate for this exponential growth. We overcome this by slowing down the logarithmic scale selection.
\begin{theorem}\label{thm:pointwise-borderline}
Let \(A>0\) be chosen so that
\begin{equation}
\label{eq:borderline-A-choice}
        \mu\coloneqq C_H e^{-A}<1 
        \quad \text{and} \quad
        A>2(1+\alpha_H).
\end{equation}
Let \(F\in\mathfrak{F}_{\lambda,\Lambda}\), and let
\(u\in C(\overline{B_R(x_0)})\) be an \(L^p\)-viscosity solution of
\[
        F(D^2u)=f
        \qquad\text{in }B_R(x_0),
\]
with \(f\in L^p(B_R(x_0))\). Define
\[
        \Theta_R(x_0)
        \coloneqq
        \frac{\Adist{B_R(x_0)}{u}}{R^{1+\alpha_H}}
        +
        \norm{f}_{L^p(B_R(x_0))}.
\]
Then \(u\) is differentiable at \(x_0\), and
\begin{equation}
\label{eq:pointwise-borderline-taylor}
        \abs{
        u(x)-u(x_0)-Du(x_0)\cdot(x-x_0)
        }
        \le
        C\Theta_R(x_0)
        \abs{x-x_0}^{1+\alpha_H}
        \exp\left(
        A\sqrt{1+\log\frac{R}{\abs{x-x_0}}}
        \right)
\end{equation}
for every \(x\in B_R(x_0)\). The constant \(C\) depends only on $n,p,\lambda,\Lambda,C_H,\alpha_H,A .$
\end{theorem}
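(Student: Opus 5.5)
The plan is to repeat the proof of \Cref{thm:pointwise-log} with a slower scale selection. The normalization becomes $r^{1+\alpha_H}\exp\bigl(A\sqrt{L_R(r)}\bigr)$, and the radii are chosen so that $\sqrt{L_R}$, rather than $L_R$, increases by a fixed amount at each step. Concretely, I would define
\[
        L_R(r_k)=(k+1)^2,\qquad\text{equivalently}\qquad r_k=R\exp\bigl(1-(k+1)^2\bigr),
\]
so that $r_0=R$, $r_k\downarrow0$, $\sqrt{L(r_{k+1})}=\sqrt{L(r_k)}+1$, and $r_k/r_{k+1}=e^{2k+3}$. I would then set $E_k=\Adist{B_{r_k}(x_0)}{u}$ and $A_k=E_k/\bigl(r_k^{1+\alpha_H}e^{A(k+1)}\bigr)$.

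\textbf{Step 1: the recurrence for $A_k$.} Apply \Cref{lem:two-scale} with $s=r_{k+1}$, $r=r_k$, and use $2-\frac np=1+\alpha_H$, so the forcing term is $Cr_k^{1+\alpha_H}F_0$ with $F_0=\norm{f}_{L^p(B_R(x_0))}$. Dividing by $r_{k+1}^{1+\alpha_H}e^{A(k+2)}$, the homogeneous coefficient becomes exactly $C_He^{-A}=\mu<1$. The forcing coefficient is
\[
        \Bigl(\frac{r_k}{r_{k+1}}\Bigr)^{1+\alpha_H}e^{-A(k+2)}
        =\exp\bigl\{(2(1+\alpha_H)-A)k+3(1+\alpha_H)-2A\bigr\}.
\]
Since $A>2(1+\alpha_H)$, this coefficient is bounded by $Ce^{-\delta k}$ with $\delta=A-2(1+\alpha_H)>0$. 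No power of $R$ appears, because $2-\frac np-(1+\alpha_H)=0$; this is why $\Theta_R$ carries $\norm{f}_{L^p}$ unweighted. Iterating $A_{k+1}\le\mu A_k+CF_0e^{-\delta k}$ gives $\sup_k A_k\le C(A_0+F_0)\le C\Theta_R(x_0)$, noting $A_0=e^{-A}\Adist{B_R}{u}/R^{1+\alpha_H}$.

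\textbf{Step 2: intermediate radii.} For $r_{k+1}<s\le r_k$, apply \Cref{lem:two-scale} once more between $s$ and $r_k$. The homogeneous term is at most $C\Theta s^{1+\alpha_H}e^{A(k+1)}\le C\Theta s^{1+\alpha_H}e^{A\sqrt{L(s)}}$, since $\sqrt{L(s)}\ge k+1$. For the forcing term,
\[
        \frac{r_k^{1+\alpha_H}}{s^{1+\alpha_H}e^{A\sqrt{L(s)}}}\le e^{(1+\alpha_H)(2k+3)-A(k+1)},
\]
which is bounded for the same reason as in Step 1. This yields
\[
        \Adist{B_s(x_0)}{u}\le C\Theta_R(x_0)\,s^{1+\alpha_H}\exp\bigl(A\sqrt{L_R(s)}\bigr)\qquad\text{for all }0<s\le R.
\]

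\textbf{Step 3: dyadic telescoping to a single affine function.} This follows \Cref{thm:pointwise-log} with $\omega(d)=d^{\alpha_H}\exp(A\sqrt{L_R(d)})$ in place of $d^{\alpha_H}L_R(d)^m$. The one point needing care, and the step I expect to be the main obstacle, is that the tails $\sum_{i\ge j}d_i^{\alpha_H}e^{A\sqrt{L(d_i)}}$, with $d_i=2^{-i}R$, must be comparable to their first term. I would check the ratio of consecutive terms:
\[
        2^{-\alpha_H}\exp\Bigl(A\bigl(\sqrt{L+\log2}-\sqrt L\bigr)\Bigr)\le 2^{-\alpha_H}\exp\Bigl(\frac{A\log2}{2\sqrt L}\Bigr),
\]
which is at most $2^{-\alpha_H/2}$ once $L\ge L_*(A,\alpha_H)$. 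On the finitely many dyadic levels with $L<L_*$, the terms are comparable up to a constant depending on $A$ and $\alpha_H$. Together these give the geometric-tail bound. With it, the slopes $p_j$ and constants $c_j$ converge, the limit affine function has $c_\infty=u(x_0)$, and one obtains $\sup_{B_s}|u-P_{x_0}|\le C\Theta s^{1+\alpha_H}e^{A\sqrt{L(s)}}$, using $d_j\le 2s$ and $L(d_j)\le L(s)$.

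Finally, the right-hand side is $o(s)$ as $s\downarrow0$, because $e^{A\sqrt{L(s)}}$ grows more slowly than any negative power of $s$. Hence $u$ is differentiable at $x_0$ with $Du(x_0)=p_\infty$, and \eqref{eq:pointwise-borderline-taylor} follows by taking $s=|x-x_0|$.
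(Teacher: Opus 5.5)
Your proposal is correct and follows essentially the same route as the paper: the same radii $L_R(r_k)=(k+1)^2$, the same normalization $r_k^{1+\alpha_H}e^{A(k+1)}$ with contraction factor $C_He^{-A}$, and the same single application of \Cref{lem:two-scale} at intermediate radii. There your bound via $\sqrt{L(s)}\ge k+1$ is a slightly simpler substitute for the paper's $t-(k+1)^2\le 2\sqrt t+1$, and your check that the dyadic tails of $d^{\alpha_H}e^{A\sqrt{L_R(d)}}$ are dominated by their first term is correct; the paper leaves that detail implicit when it says the expansion follows as in \Cref{thm:pointwise-log}.
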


\begin{proof}
Set
\[    
        F_0\coloneqq \norm{f}_{L^p(B_R(x_0))},
        \quad \text{and} \quad
        L(s)\coloneqq L_R(s) = 1+\log\frac Rs.
\]
We first prove the excess estimate
\begin{equation}\label{eq:borderline-excess-claim}
        \Adist{B_s(x_0)}{u}
        \le
        C\Theta_R(x_0)
        s^{1+\alpha_H}
        \exp\left(A\sqrt{L(s)}\right)
        \qquad\text{for every }0<s\le R .
\end{equation}
Define a decreasing sequence \((r_k)_{k\ge0}\) by
\[
        L(r_k)=(k+1)^2,
        \qquad\text{equivalently}\qquad
        r_k=R\exp\{1-(k+1)^2\}.
\]
Thus \(r_0=R\), \(r_k\downarrow0\), and
\[
        L(r_{k+1})-L(r_k)
        =
        2k+3.
\]
Set
\[
        E_k\coloneqq \Adist{B_{r_k}(x_0)}{u},
        \qquad
        B_k
        \coloneqq
        \frac{E_k}
        {r_k^{1+\alpha_H}\exp(A(k+1))}.
\]
Applying \Cref{lem:two-scale} with \(s=r_{k+1}\) and \(r=r_k\), and using
\[
        2-\frac np=1+\alpha_H,
\]
we obtain
\[
        E_{k+1}
        \le
        C_H\left(\frac{r_{k+1}}{r_k}\right)^{1+\alpha_H}E_k
        +
        C r_k^{1+\alpha_H}F_0 .
\]
Dividing by \(r_{k+1}^{1+\alpha_H}\exp(A(k+2))\), the first term becomes
\[
        C_H e^{-A}B_k
        =
        \mu B_k,
\]
while the forcing term 
\[
\begin{aligned}
        C F_0
        \frac{r_k^{1+\alpha_H}}
        {r_{k+1}^{1+\alpha_H}\exp(A(k+2))}
        &=
        C F_0
        \exp\left\{
        (1+\alpha_H)\bigl(L(r_{k+1})-L(r_k)\bigr)
        -A(k+2)
        \right\}                                      \\
        &=
        C F_0
        \exp\left\{
        (1+\alpha_H)(2k+3)-A(k+2)
        \right\}.
\end{aligned}
\]
Because \(A>2(1+\alpha_H)\), the last exponential factor is uniformly bounded
in \(k\). Hence
\[
        B_{k+1}
        \le
        \mu B_k+CF_0,
        \qquad
        \mu<1.
\]
Iterating,
\[
        B_k
        \le
        C(B_0+F_0)
        \le
        C\Theta_R(x_0)
        \qquad\text{for every }k\ge0.
\]
Therefore
\begin{equation}
\label{eq:borderline-discrete-excess}
        E_k
        \le
        C\Theta_R(x_0)
        r_k^{1+\alpha_H}
        \exp\left(A\sqrt{L(r_k)}\right).
\end{equation}

We now pass from the discrete scales to an arbitrary scale. Let
\(0<s\le R\), and choose \(k\ge0\) such that
\[
        r_{k+1}<s\le r_k .
\]
Applying \Cref{lem:two-scale} with \(r=r_k\), we get
\[
        \Adist{B_s(x_0)}{u}
        \le
        C_H\left(\frac{s}{r_k}\right)^{1+\alpha_H}E_k
        +
        C r_k^{1+\alpha_H}F_0 .
\]
Using \eqref{eq:borderline-discrete-excess} and \(L(s)\ge L(r_k)\), the first
term is bounded by
\[
        C\Theta_R(x_0)
        s^{1+\alpha_H}
        \exp\left(A\sqrt{L(s)}\right).
\]

It remains to control the forcing term. Write \(t=L(s)\). Since
\(r_{k+1}<s\le r_k\), we have
\[
        (k+1)^2=L(r_k)\le t<L(r_{k+1})=(k+2)^2.
\]
Therefore
\[
\begin{aligned}
        \frac{r_k^{1+\alpha_H}}
        {s^{1+\alpha_H}\exp(A\sqrt{L(s)})}
        &=
        \exp\left\{
        (1+\alpha_H)(t-(k+1)^2)-A\sqrt t
        \right\}.
\end{aligned}
\]
Since \(t<(k+2)^2\), we have
\[
        t-(k+1)^2
        \le
        (k+2)^2-(k+1)^2
        =
        2k+3
        \le
        2\sqrt t+1.
\]
Hence
\[
        (1+\alpha_H)(t-(k+1)^2)-A\sqrt t
        \le
        \bigl(2(1+\alpha_H)-A\bigr)\sqrt t+(1+\alpha_H).
\]
Because \(A>2(1+\alpha_H)\), this is bounded above uniformly in \(t\). Thus
\[
        r_k^{1+\alpha_H}F_0
        \le
        C\Theta_R(x_0)
        s^{1+\alpha_H}
        \exp\left(A\sqrt{L(s)}\right),
\]
and \eqref{eq:borderline-excess-claim} follows. From here, \eqref{eq:pointwise-borderline-taylor} follows as in the proof of \Cref{thm:pointwise-log}.
\end{proof}

\begin{remark}[Critical quantitative input]
The same observation applies at the critical threshold.  Suppose that, for a
given class of operators, the homogeneous equation satisfies
\[
        \mathrm{dist}_{B_\rho}(h,\mathcal A)
        \le
        C_*\rho^{1+\alpha_*}\|h\|_{L^\infty(B_1)}.
\]
If $\alpha_*=1-n/p$, then the proof of Theorem~\ref{thm:pointwise-borderline}
gives the endpoint estimate
\[
        |u(x)-u(x_0)-Du(x_0)\cdot(x-x_0)|
        \le
        C\Theta_R(x_0)|x-x_0|^{1+\alpha_*}
        \exp\left(
        A\sqrt{1+\log\frac{R}{|x-x_0|}}
        \right),
\]
provided $A>0$ is chosen so that
\[
        C_*e^{-A}<1
        \qquad\text{and}\qquad
        A>2(1+\alpha_*).
\]
Thus the critical endpoint modulus is also determined quantitatively by the
homogeneous affine approximation constants.
\end{remark}

\begin{corollary}[Interior gradient modulus at the critical threshold]
\label{cor:interior-critical-gradient-modulus}
Assume that
\[
        \alpha_H=\sigma_p=1-\frac np .
\]
Let \(A>0\) be chosen so that
\[
        C_H e^{-A}<1
        \qquad\text{and}\qquad
        A>2(1+\alpha_H).
\]
Let \(F\in\mathfrak F_{\lambda,\Lambda}\), and let \(u\) be an
\(L^p\)-viscosity solution of
\[
        F(D^2u)=f
        \quad\text{in }\Omega,
\]
with \(f\in L^p(\Omega)\).  Then \(u\) is differentiable in \(\Omega\).
Moreover, for every \(\Omega'\Subset\Omega\), set
\[
        \rho_{\Omega'}:=
        \min\left\{
        1,\frac14\dist(\Omega',\partial\Omega)
        \right\},
        \qquad
        D_{\Omega'}:=\diam(\Omega')+2\rho_{\Omega'} ,
\]
with the convention that \(\dist(\Omega',\partial\Omega)=+\infty\) if
\(\Omega=\mathbb R^n\).  Then there exists
\[
        C
        =
        C\left(
        n,p,\lambda,\Lambda,C_H,\alpha_H,A,
        \rho_{\Omega'}^{-1}
        \right)
\]
such that
\begin{equation}\label{eq:gradient-critical-modulus}
        \abs{Du(x)-Du(z)}
        \le
        C\left[
        \norm{u}_{L^\infty(\Omega)}
        +
        \norm{f}_{L^p(\Omega)}
        \right]
        \abs{x-z}^{\alpha_H}
        \exp\left(
        A\sqrt{
        1+\log\frac{D_{\Omega'}}{\abs{x-z}}
        }
        \right)
\end{equation}
for every \(x,z\in\Omega'\), \(x\neq z\).
\end{corollary}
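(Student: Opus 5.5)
The plan mirrors the proof of \Cref{cor:interior-log-holder}, with \Cref{thm:pointwise-borderline} replacing \Cref{thm:pointwise-log}. Set
\[
        M:=\norm{u}_{L^\infty(\Omega)}+\norm{f}_{L^p(\Omega)},
        \qquad R:=2\rho_{\Omega'} .
\]
For every \(y\in\Omega'\) we have \(B_{R}(y)\Subset\Omega\). Apply \Cref{thm:pointwise-borderline} in \(B_R(y)\); this gives differentiability of \(u\) at \(y\) and the pointwise expansion
\[
        \sup_{B_\varrho(y)}\abs{u(x)-u(y)-Du(y)\cdot(x-y)}
        \le C\Theta_R(y)\,\varrho^{1+\alpha_H}\exp\left(A\sqrt{1+\log\frac{R}{\varrho}}\right),
        \qquad 0<\varrho\le R .
\]
We then bound \(\Theta_R(y)\) uniformly. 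Testing with the zero affine function gives
\[
        \Adist{B_R(y)}{u}\le\norm{u}_{L^\infty(\Omega)} .
\]
Since \(R^{-(1+\alpha_H)}\) is controlled by a power of \(\rho_{\Omega'}^{-1}\) (note \(\rho_{\Omega'}\le1\)), we get \(\Theta_R(y)\le CM\) with \(C=C(\rho_{\Omega'}^{-1},\alpha_H)\). Every \(y\in\Omega\) lies in some such \(\Omega'\), so \(u\) is differentiable throughout \(\Omega\).

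Next we replace \(R\) by \(D_{\Omega'}\) inside the logarithm. Because \(R=2\rho_{\Omega'}\le D_{\Omega'}\) and \(t\mapsto\exp(A\sqrt{1+\log t})\) is increasing, the expansion holds with the modulus
\[
        \omega_0(\varrho):=\varrho^{\alpha_H}\exp\left(A\sqrt{1+\log\frac{D_{\Omega'}}{\varrho}}\right)
\]
and right-hand side \(CM\varrho\,\omega_0(\varrho)\) on \(0<\varrho\le R_0:=\rho_{\Omega'}\).

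We then apply \Cref{lem:pointwise-to-gradient-modulus} with \(\omega=\omega_0\) and \(R_0=\rho_{\Omega'}\). The main point to verify is that \(\omega_0\) is a genuine modulus of continuity, so that the lemma's monotonicity hypothesis is available in the large-distance step. Clearly \(\omega_0(\varrho)\to0\) as \(\varrho\downarrow0\), since the exponential factor is sub-power. Writing \(\tau=\log(D_{\Omega'}/\varrho)\ge0\), we have
\[
        \frac{d}{d\varrho}\log\omega_0=\frac1\varrho\left(\alpha_H-\frac{A}{2\sqrt{1+\tau}}\right),
\]
which is negative when \(1+\tau<A^2/(4\alpha_H^2)\). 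So \(\omega_0\) may fail to be monotone near \(\varrho=D_{\Omega'}\).

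As in \Cref{cor:interior-log-holder}, we replace \(\omega_0\) by its nondecreasing envelope
\[
        \bar\omega(\varrho)=\sup_{0<\sigma\le\varrho}\omega_0(\sigma),
        \qquad \bar\omega(0)=0 .
\]
We claim \(\omega_0\le\bar\omega\le C(A,\alpha_H)\,\omega_0\). The non-monotone region is confined to \(\tau\le\tau_*:=A^2/(4\alpha_H^2)\), that is, \(\varrho\ge D_{\Omega'}e^{-\tau_*}\). There both \(\omega_0(\varrho)\) and \(\bar\omega(\varrho)\) are comparable to \(D_{\Omega'}^{\alpha_H}\), up to constants depending only on \(A\) and \(\alpha_H\). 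Below that threshold \(\omega_0\) is increasing, so \(\bar\omega=\omega_0\) there. Continuity of \(\bar\omega\) and positivity for \(\varrho>0\) are immediate.

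The lemma then yields, for \(0<\abs{x-z}<\rho_{\Omega'}/4\),
\[
        \abs{Du(x)-Du(z)}\le C_nCM\,\bar\omega(\abs{x-z})\le CM\,\omega_0(\abs{x-z}) .
\]
For \(\abs{x-z}\ge\rho_{\Omega'}/4\), \eqref{eq:global-gradient-modulus} gives a bound of the form
\[
        \frac{C}{\bar\omega(\rho_{\Omega'}/4)}\left(\frac{\norm{u}_{L^\infty(\Omega)}}{\rho_{\Omega'}}+M\bar\omega(\rho_{\Omega'})\right)\bar\omega(\abs{x-z}) .
\]
Here \(\bar\omega(\rho_{\Omega'}/4)\ge(\rho_{\Omega'}/4)^{\alpha_H}\), since the exponential factor is at least \(1\). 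Also \(\bar\omega(\rho_{\Omega'})\) is bounded above by a constant depending on \(\rho_{\Omega'}^{-1}\), \(A\), \(\alpha_H\) (this uses \(D_{\Omega'}/\rho_{\Omega'}\ge2\) only through comparability of envelope and \(\omega_0\)). These constants are absorbed into \(C(n,p,\lambda,\Lambda,C_H,\alpha_H,A,\rho_{\Omega'}^{-1})\), which gives \eqref{eq:gradient-critical-modulus}.

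The only step requiring care is the envelope comparison \(\bar\omega\le C\omega_0\) together with the uniform bound on \(\Theta_R(y)\); everything else is a direct transcription of \Cref{cor:interior-log-holder}.
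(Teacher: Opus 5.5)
Your route is the paper's: apply \Cref{thm:pointwise-borderline} on $B_{2\rho_{\Omega'}}(y)$, bound $\Theta_{2\rho_{\Omega'}}(y)\le CM$, enlarge the logarithm from $2\rho_{\Omega'}$ to $D_{\Omega'}$, pass to the nondecreasing envelope of $\omega_0$, and invoke \Cref{lem:pointwise-to-gradient-modulus}. You also verify two details the paper omits, and both are correct. First, $\omega_0$ really is non-monotone near $D_{\Omega'}$, because $A>2(1+\alpha_H)>2\alpha_H$. Second, $\omega_0\le\bar\omega\le C(A,\alpha_H)\,\omega_0$.

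One step in the large-distance range is wrong as written. You claim that $\bar\omega(\rho_{\Omega'})$ is bounded by a constant depending only on $\rho_{\Omega'}^{-1},A,\alpha_H$. But $\bar\omega(\rho_{\Omega'})\ge\omega_0(\rho_{\Omega'})=\rho_{\Omega'}^{\alpha_H}\exp\bigl(A\sqrt{1+\log(D_{\Omega'}/\rho_{\Omega'})}\bigr)$, and $D_{\Omega'}/\rho_{\Omega'}$ contains $\diam(\Omega')/\rho_{\Omega'}$, which $\rho_{\Omega'}^{-1}$ does not control. For example, take $\Omega$ a ball of radius $N$ and $\Omega'$ the concentric ball of radius $N-4$; then $\rho_{\Omega'}=1$ while $D_{\Omega'}\to\infty$. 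You pair this with the lower bound $\bar\omega(\rho_{\Omega'}/4)\ge(\rho_{\Omega'}/4)^{\alpha_H}$, which discards the exponential. The result is a constant that grows with $\diam(\Omega')$, which the statement does not permit.

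The fix is to bound the ratio instead of the two factors separately. Since $\varrho\mapsto\exp\bigl(A\sqrt{1+\log(D_{\Omega'}/\varrho)}\bigr)$ is decreasing, $\omega_0(\sigma)\le 4^{\alpha_H}\omega_0(\sigma/4)$ for every $\sigma$. Hence $\bar\omega(\rho_{\Omega'})\le 4^{\alpha_H}\bar\omega(\rho_{\Omega'}/4)$. Then in \eqref{eq:global-gradient-modulus}:
\begin{itemize}
\item the term $CM\,\bar\omega(\rho_{\Omega'})/\bar\omega(\rho_{\Omega'}/4)$ is at most $4^{\alpha_H}CM$;
\item the term $\norm{u}_{L^\infty(\Omega)}/\bigl(\rho_{\Omega'}\,\bar\omega(\rho_{\Omega'}/4)\bigr)$ is handled by your crude lower bound.
\end{itemize}
With this change the argument is complete, and the constant depends only on the listed quantities.
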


\begin{proof}
The proof follows the same argument as that of
Corollary~\ref{cor:interior-log-holder}.  One applies
Theorem~\ref{thm:pointwise-borderline} in each ball
\(B_{2\rho_{\Omega'}}(y)\Subset\Omega\), \(y\in\Omega'\).  Since
\(2\rho_{\Omega'}\le D_{\Omega'}\), the resulting pointwise expansion is
uniformly controlled by the gauge
\[
        \omega_0(\rho)
        =
        \rho^{\alpha_H}
        \exp\left(
        A\sqrt{1+\log\frac{D_{\Omega'}}{\rho}}
        \right).
\]
Applying Lemma~\ref{lem:pointwise-to-gradient-modulus}, if necessary to the
nondecreasing envelope of \(\omega_0\), gives
\eqref{eq:gradient-critical-modulus}.  We omit the remaining details.
\end{proof}

\section{An abstract scale-selection principle for endpoint Campanato recurrences}\label{sec:abstract}

We now extract the iterative core of the preceding arguments.  The point is
that no specific feature of the equation remains once the two-scale excess
inequality has been proved.  The endpoint obstruction is purely Campanato-theoretic:
the homogeneous decay has the correct exponent, but its constant prevents
contraction on fixed scales.  The scale-selection principle below compensates
for this constant by modifying the radii rather than lowering the exponent.

Consider $R$ and $\beta$ to be positive, and let
\[
        E \colon (0,R]\to[0,\infty)
\]
be a nonnegative function. We think of $E(r)$ as an excess measuring the
distance, at scale $r$, from a class of comparison objects.  The model
recurrence is
\begin{equation}\label{eq:abstract-recurrence}
        E(s)
        \le
        C_0\left(\frac{s}{r}\right)^\beta E(r)
        +
        A r^\eta,
        \qquad 0<s<r\le R,
\end{equation}
where $C_0\ge1$, $A\ge0$, and $\eta\ge\beta$.

The difficulty at the endpoint exponent $\beta$ is the possible lack of
contraction in the homogeneous term.  On fixed geometric scales, the coefficient $C_0(s/r)^\beta$ is made contractive only by choosing $s/r$ sufficiently small. However, this fixed choice may be incompatible with the scaling of the perturbative term at the limiting exponent.  The following theorem shows that the obstruction can be compensated by selecting the radii according to a logarithmic law.

\begin{theorem}[Endpoint Campanato principle: supercritical perturbations]
\label{thm:abstract-log-campanato}
Assume that $E$ satisfies \eqref{eq:abstract-recurrence} with
\[
        \eta > \beta.
\]
Let
\begin{equation}\label{eq:abstract-Q}
        1<Q\le \frac{\eta}{\beta},
\end{equation}
and choose any real number \(m>0\) such that
\begin{equation}\label{eq:abstract-m}
        \mu \coloneqq C_0Q^{-m}<1.
\end{equation}
Then there exists a constant $C$, depending only on
$C_0,\beta,\eta,Q,m$, such that
\begin{equation}\label{eq:abstract-log-conclusion}
        E(s)
        \le
        C\left(
        \frac{E(R)}{R^\beta}
        +
        A R^{\eta-\beta}
        \right)
        s^\beta
        \left(1+\log\frac Rs\right)^m
\end{equation}
for every $0<s\le R$.
\end{theorem}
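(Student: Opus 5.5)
The argument transcribes the proof of \Cref{thm:pointwise-log}, with the exponents $1+\alpha_H$ and $2-\frac np$ replaced by $\beta$ and $\eta$, and the forcing term $Cr^{2-\frac np}F_0$ replaced by $Ar^\eta$. Write $L(s)=1+\log\frac Rs$. We select the logarithmic scales
\[
        r_k\coloneqq R\exp(1-Q^k),\qquad L(r_k)=Q^k,
\]
so that $r_0=R$, $r_k\downarrow0$ and $L(r_{k+1})=QL(r_k)$. We then normalize the excess by the endpoint gauge,
\[
        A_k\coloneqq\frac{E(r_k)}{r_k^\beta L(r_k)^m}.
\]

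\textbf{Discrete iteration.} First apply \eqref{eq:abstract-recurrence} with $s=r_{k+1}$ and $r=r_k$, and divide by $r_{k+1}^\beta L(r_{k+1})^m$. The homogeneous term becomes exactly $C_0Q^{-m}A_k=\mu A_k$. The forcing term becomes
\[
        A\,\frac{r_k^\eta}{r_{k+1}^\beta L(r_{k+1})^m}.
\]
Substituting $r_k=Re^{1-L_k}$, where $L_k=Q^k$, this equals
\[
        AR^{\eta-\beta}\,\frac{\exp\{\eta-\beta-(\eta-\beta Q)L_k\}}{(QL_k)^m}.
\]
By \eqref{eq:abstract-Q} we have $\eta-\beta Q\ge0$, and $L_k\ge1$, so this quantity is at most $e^{\eta-\beta}AR^{\eta-\beta}$. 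Hence
\[
        A_{k+1}\le\mu A_k+CAR^{\eta-\beta}.
\]
Since $\mu<1$, summing the geometric series gives
\[
        A_k\le A_0+\frac{C}{1-\mu}\,AR^{\eta-\beta}\qquad\text{for every }k\ge0.
\]
Because $A_0=E(R)/R^\beta$, this is the bound $A_k\le C\,\Theta$, where $\Theta$ denotes the bracket in \eqref{eq:abstract-log-conclusion}.

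\textbf{Interpolation to arbitrary scales.} For $0<s\le R$, choose $k$ with $r_{k+1}<s\le r_k$. The case $s=R$ is trivial, so we may take $s<r_k$ and apply \eqref{eq:abstract-recurrence} with $r=r_k$:
\[
        E(s)\le C_0\left(\frac{s}{r_k}\right)^\beta E(r_k)+Ar_k^\eta.
\]
Using $E(r_k)\le C\,\Theta\, r_k^\beta L(r_k)^m$ and $L(r_k)\le L(s)$, the first term is at most $C\,\Theta\, s^\beta L(s)^m$. For the forcing term, $s>r_{k+1}$ and $L(s)\ge1$ give
\[
        \frac{r_k^\eta}{s^\beta L(s)^m}\le\frac{r_k^\eta}{r_{k+1}^\beta}\le e^{\eta-\beta}R^{\eta-\beta},
\]
by the same exponent computation as in the discrete step. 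Therefore $Ar_k^\eta\le C\,AR^{\eta-\beta}s^\beta L(s)^m$, which completes \eqref{eq:abstract-log-conclusion}.

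No step is a real obstacle. The only point needing care is the exponent bookkeeping in the forcing term: one must check that $(\eta-\beta Q)L_k\ge0$ absorbs the ratio $r_k^\eta/r_{k+1}^\beta$ uniformly in $k$, so that the constant depends only on $C_0,\beta,\eta,Q,m$ (through $e^{\eta-\beta}$ and $(1-\mu)^{-1}$). This is exactly where the restriction $Q\le\eta/\beta$ is used. The strict inequality $\eta>\beta$ is needed only to make the range \eqref{eq:abstract-Q} nonempty.
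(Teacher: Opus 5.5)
Your proposal is correct and is essentially the paper's proof. It uses the same radii $r_k=R\exp(1-Q^k)$, the same normalized sequence (the paper calls it $Y_k$, which avoids your clash with the constant $A$), the same use of $\eta-\beta Q\ge0$, and the same one-step interpolation. One small point: the case $s=r_k$ with $k\ge1$, not only $s=R$, should be read off directly from the discrete bound, since the recurrence requires $s<r$.
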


\begin{proof}
Set
\[
        L(s) \coloneqq 1+\log\frac Rs .
\]
Define a decreasing sequence $(r_k)_{k\ge0}$ by
\[
        L(r_k)=Q^k,
        \qquad
        \text{equivalently}
        \qquad
        r_k=R\exp(1-Q^k).
\]
Then $r_0=R$, $r_k\downarrow0$, and
\[
        L(r_{k+1})=Q L(r_k).
\]
Set
\[
        E_k \coloneqq E(r_k),
        \qquad
        Y_k \coloneqq 
        \frac{E_k}{r_k^\beta L(r_k)^m}.
\]
Applying \eqref{eq:abstract-recurrence} with $s=r_{k+1}$ and $r=r_k$ gives
\[
        E_{k+1}
        \le
        C_0\left(\frac{r_{k+1}}{r_k}\right)^\beta E_k
        +
        A r_k^\eta .
\]
Dividing by $r_{k+1}^\beta L(r_{k+1})^m$, we obtain
\[
        Y_{k+1}
        \le
        C_0
        \left(\frac{L(r_k)}{L(r_{k+1})}\right)^m
        Y_k
        +
        A\frac{r_k^\eta}{r_{k+1}^\beta L(r_{k+1})^m}.
\]
The first coefficient is $C_0Q^{-m}=\mu<1$.

For the second term, write $L_k \coloneqq L(r_k)=Q^k$.  Since
\[
        r_k=R e^{1-L_k},
        \qquad
        r_{k+1}=R e^{1-Q L_k},
\]
we have
\begin{align*}
        \frac{r_k^\eta}{r_{k+1}^\beta L(r_{k+1})^m}
        &=
        R^{\eta-\beta}
        \frac{
        \exp\left\{\eta(1-L_k)-\beta(1-QL_k)\right\}
        }
        {(QL_k)^m}
        \\
        &=
        R^{\eta-\beta}
        \frac{
        \exp\left\{\eta-\beta-(\eta-\beta Q)L_k\right\}
        }
        {(QL_k)^m}.
\end{align*}
By \eqref{eq:abstract-Q}, $\eta-\beta Q\ge0$. Therefore
\[
        \frac{r_k^\eta}{r_{k+1}^\beta L(r_{k+1})^m}
        \le
        C R^{\eta-\beta}.
\]
Hence
\[
        Y_{k+1}
        \le
        \mu Y_k + C A R^{\eta-\beta}.
\]
Iterating,
\[
        Y_k
        \le
        C\left(
        Y_0 + A R^{\eta-\beta}
        \right)
        =
        C\left(
        \frac{E(R)}{R^\beta}
        +
        A R^{\eta-\beta}
        \right).
\]
Thus
\begin{equation}\label{eq:abstract-discrete}
        E(r_k)
        \le
        C\left(
        \frac{E(R)}{R^\beta}
        +
        A R^{\eta-\beta}
        \right)
        r_k^\beta L(r_k)^m .
\end{equation}

It remains to pass from the discrete sequence to an arbitrary scale.  Let
$0<s\le R$, and choose $k\ge0$ such that
\[
        r_{k+1}<s\le r_k.
\]
Using \eqref{eq:abstract-recurrence} once more, with $r=r_k$, gives
\[
        E(s)
        \le
        C_0\left(\frac{s}{r_k}\right)^\beta E(r_k)
        +
        A r_k^\eta .
\]
The first term is bounded by
\[
        C\left(
        \frac{E(R)}{R^\beta}
        +
        A R^{\eta-\beta}
        \right)
        s^\beta L(s)^m,
\]
because $L(s)\ge L(r_k)$.

For the perturbative term, since $s>r_{k+1}$,
\[
        \frac{r_k^\eta}{s^\beta L(s)^m}
        \le
        \frac{r_k^\eta}{r_{k+1}^\beta}
        =
        R^{\eta-\beta}
        \exp\left\{
        \eta-\beta-(\eta-\beta Q)L_k
        \right\}
        \le
        C R^{\eta-\beta}.
\]
Therefore
\[
        A r_k^\eta
        \le
        C A R^{\eta-\beta}s^\beta L(s)^m,
\]
and \eqref{eq:abstract-log-conclusion} follows.
\end{proof}

The borderline case $\eta=\beta$ requires a slower selection of radii.  No
finite power of the logarithm can compensate the perturbative accumulation
generated by the recurrence.  The appropriate replacement is a sub-power
correction.

\begin{theorem}[Endpoint Campanato principle: critical perturbations]
\label{thm:abstract-critical-campanato}
Assume that $E$ satisfies
\begin{equation}\label{eq:abstract-critical-recurrence}
        E(s)
        \le
        C_0\left(\frac{s}{r}\right)^\beta E(r)
        +
        A r^\beta,
        \qquad 0<s<r\le R.
\end{equation}
Let $B>0$ be chosen so that
\begin{equation}\label{eq:abstract-critical-B}
        C_0e^{-B}<1
        \qquad\text{and}\qquad
        B>2\beta .
\end{equation}
Then there exists a constant $C$, depending only on $C_0,\beta,B$, such that
\begin{equation}\label{eq:abstract-critical-conclusion}
        E(s)
        \le
        C\left(
        \frac{E(R)}{R^\beta}
        +
        A
        \right)
        s^\beta
        \exp\left(
        B\sqrt{1+\log\frac Rs}
        \right)
\end{equation}
for every $0<s\le R$.
\end{theorem}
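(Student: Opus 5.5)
The plan is to repeat the argument of \Cref{thm:pointwise-borderline} verbatim at the abstract level, with $\beta$ in place of $1+\alpha_H$ and $B$ in place of the constant $A$ there. Set $L(s)\coloneqq 1+\log\frac Rs$ and select the slowly decaying radii
\[
        L(r_k)=(k+1)^2,
        \qquad\text{that is}\qquad
        r_k=R\exp\{1-(k+1)^2\},
\]
so that $r_0=R$, $r_k\downarrow0$ and $L(r_{k+1})-L(r_k)=2k+3$. Put $E_k\coloneqq E(r_k)$ and normalize by
\[
        Y_k\coloneqq \frac{E_k}{r_k^\beta\exp(B(k+1))}
        =\frac{E_k}{r_k^\beta\exp\bigl(B\sqrt{L(r_k)}\bigr)}.
\]

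Applying \eqref{eq:abstract-critical-recurrence} with $s=r_{k+1}$, $r=r_k$ and dividing by $r_{k+1}^\beta e^{B(k+2)}$ gives
\[
        Y_{k+1}\le C_0e^{-B}\,Y_k + A\,\frac{r_k^\beta}{r_{k+1}^\beta e^{B(k+2)}}.
\]
The homogeneous factor $(r_{k+1}/r_k)^\beta$ cancels exactly against the normalization, leaving $\mu\coloneqq C_0e^{-B}<1$.

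For the forcing term, $r_k/r_{k+1}=e^{L(r_{k+1})-L(r_k)}=e^{2k+3}$, so it equals $A\exp\{\beta(2k+3)-B(k+2)\}$. Since $B>2\beta$, this is bounded by $CA$ uniformly in $k$. Hence $Y_{k+1}\le\mu Y_k+CA$, and iterating gives
\[
        Y_k\le C\bigl(Y_0+A\bigr)\le C\Bigl(\frac{E(R)}{R^\beta}+A\Bigr),
\]
where we use $Y_0=E(R)e^{-B}/R^\beta$. This is the discrete estimate at the scales $r_k$.

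To pass to arbitrary $0<s\le R$, pick $k$ with $r_{k+1}<s\le r_k$ and apply \eqref{eq:abstract-critical-recurrence} once more with $r=r_k$:
\[
        E(s)\le C_0\Bigl(\frac{s}{r_k}\Bigr)^\beta E(r_k)+A r_k^\beta.
\]
The first term is controlled by the discrete bound together with $L(s)\ge L(r_k)$, which gives $e^{B(k+1)}\le e^{B\sqrt{L(s)}}$.

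The step I expect to need the most care is the forcing term at an intermediate scale; it is where $B>2\beta$ is used a second time. Write $t=L(s)$, so that $(k+1)^2\le t<(k+2)^2$. Then
\[
        \frac{r_k^\beta}{s^\beta e^{B\sqrt t}}=\exp\{\beta(t-(k+1)^2)-B\sqrt t\}.
\]
Moreover $t-(k+1)^2\le 2k+3\le 2\sqrt t+1$, so the exponent is at most $(2\beta-B)\sqrt t+\beta$, which is bounded above. This yields $Ar_k^\beta\le CA\,s^\beta e^{B\sqrt{L(s)}}$, and \eqref{eq:abstract-critical-conclusion} follows. All constants depend only on $C_0,\beta,B$.
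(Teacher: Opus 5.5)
Your proposal is correct and follows the paper's proof essentially line by line: the same radii $L(r_k)=(k+1)^2$, the same normalization $Y_k=E_k/(r_k^\beta e^{B(k+1)})$ giving $Y_{k+1}\le C_0e^{-B}Y_k+CA$, and the same intermediate-scale bound $t-(k+1)^2\le 2\sqrt t+1$, which uses $B>2\beta$ a second time. One small point that both you and the paper leave implicit: when $s=r_k$ exactly, the recurrence does not apply because it requires $s<r$, and you should instead invoke the discrete bound directly.
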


\begin{proof}
Again set
\[
        L(s) \coloneqq 1+\log\frac Rs .
\]
Define $(r_k)_{k\ge0}$ by
\[
        L(r_k)=(k+1)^2,
        \qquad
        \text{equivalently}
        \qquad
        r_k=R\exp\{1-(k+1)^2\}.
\]
Then $r_0=R$, $r_k\downarrow0$, and
\[
        L(r_{k+1})-L(r_k)=2k+3.
\]
Set
\[
        E_k\coloneqq E(r_k),
        \qquad
        Y_k \coloneqq 
        \frac{E_k}{r_k^\beta e^{B(k+1)}}.
\]
Applying \eqref{eq:abstract-critical-recurrence} with $s=r_{k+1}$ and $r=r_k$ yields
\[
        E_{k+1}
        \le
        C_0\left(\frac{r_{k+1}}{r_k}\right)^\beta E_k
        +
        A r_k^\beta .
\]
Dividing by $r_{k+1}^\beta e^{B(k+2)}$, the homogeneous term becomes
\[
        C_0e^{-B}Y_k.
\]
The perturbative term becomes
\begin{align*}
        A\frac{r_k^\beta}{r_{k+1}^\beta e^{B(k+2)}}
        &=
        A\exp\left\{
        \beta\big(L(r_{k+1})-L(r_k)\big)-B(k+2)
        \right\}
        \\
        &=
        A\exp\left\{
        \beta(2k+3)-B(k+2)
        \right\}.
\end{align*}
By \eqref{eq:abstract-critical-B}, this expression is uniformly bounded by
$CA$. Hence
\[
        Y_{k+1}
        \le
        \mu Y_k + CA,
        \qquad
        \mu \coloneqq C_0e^{-B}<1.
\]
Iterating,
\[
        Y_k
        \le
        C(Y_0+A)
        =
        C\left(
        \frac{E(R)}{R^\beta e^B}
        +
        A
        \right).
\]
Therefore
\begin{equation}\label{eq:abstract-critical-discrete}
        E(r_k)
        \le
        C\left(
        \frac{E(R)}{R^\beta}
        +
        A
        \right)
        r_k^\beta
        \exp\left(
        B\sqrt{L(r_k)}
        \right).
\end{equation}

Let now $0<s\le R$, and choose $k\ge0$ such that
\[
        r_{k+1}<s\le r_k.
\]
By \eqref{eq:abstract-critical-recurrence},
\[
        E(s)
        \le
        C_0\left(\frac{s}{r_k}\right)^\beta E(r_k)
        +
        A r_k^\beta .
\]
The first term is bounded by the desired right-hand side, since
$L(s)\ge L(r_k)$.

For the second term, write $t=L(s)$.  Since
\[
        (k+1)^2=L(r_k)\le t<L(r_{k+1})=(k+2)^2,
\]
we have
\[
        t-(k+1)^2
        \le
        (k+2)^2-(k+1)^2
        =
        2k+3
        \le
        2\sqrt t+1.
\]
Hence
\begin{align*}
        \frac{r_k^\beta}{s^\beta e^{B\sqrt{L(s)}}}
        &=
        \exp\left\{
        \beta\big(t-(k+1)^2\big)-B\sqrt t
        \right\}
        \\
        &\le
        \exp\left\{
        (2\beta-B)\sqrt t+\beta
        \right\}
        \le
        C,
\end{align*}
because $B>2\beta$. Therefore
\[
        A r_k^\beta
        \le
        C A s^\beta e^{B\sqrt{L(s)}}.
\]
This proves \eqref{eq:abstract-critical-conclusion}.
\end{proof}

\begin{remark}
Theorems~\ref{thm:abstract-log-campanato} and
\ref{thm:abstract-critical-campanato} separate the analytic input from the
iteration. In applications, the analytic part consists in proving a two-scale excess inequality of the form \eqref{eq:abstract-recurrence}.  Once this is available, the endpoint decay follows from the scale-selection mechanism alone. This is the reason the method is not tied to the particular elliptic equation considered here.
\end{remark}

\section{Endpoint examples and the question of sharpness}
\label{sec:optimality}

We conclude with examples that clarify the role of logarithmic corrections at
endpoint scales.  The purpose of this section is not to prove the optimality
of the moduli in \Cref{thm:pointwise-log,thm:pointwise-borderline}.  Rather,
we isolate the scaling mechanisms that make logarithmic losses natural at
critical thresholds, and we explain why the sharpness question in the strict
homogeneous-limited regime remains more delicate.

We begin with a scaling observation.  The estimates proved in this paper are
driven by recursive excess inequalities, and in such arguments the relevant
feature of the right-hand side is its behavior under rescaling.  For
\[
        F(D^2u)=f,
\]
the rescaled source in $B_1$ is
\[
        f_r(y):=r^2f(x_0+ry),
\]
and
\[
        \|f_r\|_{L^p(B_1)}
        =
        r^{2-\frac np}\|f\|_{L^p(B_r(x_0))}.
\]
The same relation holds in weak spaces:
\[
        \|f_r\|_{L^{p,\infty}(B_1)}
        =
        r^{2-\frac np}\|f\|_{L^{p,\infty}(B_r(x_0))}.
\]
Thus the scale governing the perturbation is the exponent
\[
        \sigma_p=1-\frac np,
\]
rather than the particular choice of strong or weak $L^p$ norm.  This viewpoint
is consistent with the borderline theory of Daskalopoulos, Kuusi and Mingione
\cite{DaskalopoulosKuusiMingione2014}, where endpoint estimates are formulated
through scale-sensitive control of the datum.

Classical linear examples already show that logarithmic losses are natural at
critical scales.  For instance, in $\mathbb R^n$, $n\ge2$, the function
\[
        u_0(x)=x_1\log\frac e{|x|},
\]
smoothly truncated away from the origin, is log-Lipschitz but not Lipschitz at
the origin, while
\[
        |\Delta u_0(x)|\le \frac{C}{|x|}
        \qquad\text{and}\qquad
        |x|^{-1}\in L^{n,\infty}(B_{1/2}).
\]
A softer logarithm gives a strong $L^n$ datum: if
\[
        \widetilde L(r):=\log\frac{e^e}{r},
        \qquad
        u_1(x):=x_1\log \widetilde L(|x|),
\]
then $u_1$ is still log-Lipschitz and not Lipschitz, whereas
\[
        |\Delta u_1(x)|
        \le
        \frac{C}{|x|\widetilde L(|x|)}
\]
and
\[
        \int_{B_{1/2}} |\Delta u_1|^n\,dx
        \le
        C\int_0^{1/2}
        \frac{dr}{r\widetilde L(r)^n}
        <\infty.
\]
Similarly, if $P_2$ is a nontrivial harmonic homogeneous polynomial of degree
two, then
\[
        v(x)=P_2(x)\log\frac e{|x|}
\]
has bounded Laplacian near the origin, while $Dv$ has the sharp modulus
$r\log(e/r)$.  These elementary models reflect the logarithmic endpoint
phenomena captured, in the fully nonlinear setting, by the universal moduli of
continuity in \cite{Teixeira2014}.

We now turn to the fully nonlinear obstruction relevant to the present paper.
Here the limiting exponent is not imposed by the source term, but by the
homogeneous equation itself.  The following computation shows how singular
homogeneous profiles interact with logarithmic perturbations at the critical
scale.

\begin{proposition}[Singular-profile test at the homogeneous endpoint]
\label{prop:singular-profile-test}
Assume that $F$ is positively homogeneous and uniformly elliptic.  Let $h$ be
a nontrivial homogeneous solution of
\[
        F(D^2h)=0
        \qquad\text{in } B_1\setminus\{0\},
\]
smooth away from the origin, with homogeneity $1+\alpha$, $0<\alpha<1$:
\[
        h(rx)=r^{1+\alpha}h(x).
\]
Let
\[
        u(x):=h(x)\varphi(|x|),
        \qquad r:=|x|.
\]
Then, near the origin,
\[
        |F(D^2u)|
        \le
        C\left(
        r^\alpha|\varphi'(r)|
        +
        r^{1+\alpha}|\varphi''(r)|
        \right).
\]

Consequently, if
\[
        L(r):=1+\log\frac1r
        \qquad\text{and}\qquad
        \varphi(r)=L(r)^m,
\]
then
\[
        |F(D^2u)|
        \le
        C r^{\alpha-1}L(r)^{m-1}.
\]
In particular, for
\[
        p_\alpha:=\frac{n}{1-\alpha},
        \qquad
        \sigma_{p_\alpha}=\alpha,
\]
the case $m=1$ produces a right-hand side of critical weak size:
\[
        F(D^2u)\in L^{p_\alpha,\infty}_{\mathrm{loc}}.
\]
For $m>1$, the logarithmic factor pushes the source beyond this weak endpoint
scale.

If instead
\[
        \ell(r):=\log L(r),
        \qquad
        \varphi(r)=\ell(r)^m,
\]
then
\[
        |F(D^2u)|
        \le
        C r^{\alpha-1}\frac{\ell(r)^{m-1}}{L(r)},
\]
and hence
\[
        F(D^2u)\in L^{p_\alpha}_{\mathrm{loc}}.
\]
Thus logarithmic deterioration of the homogeneous $C^{1,\alpha}$ behavior is
compatible with the critical scale $\sigma_p=\alpha$.
\end{proposition}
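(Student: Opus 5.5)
The plan is a direct product-rule computation combined with the homogeneity of $F$. Since $h$ is smooth away from the origin and homogeneous of degree $1+\alpha$, the derivatives $D^jh$ are homogeneous of degree $1+\alpha-j$:
\[
|h|\le Cr^{1+\alpha},\qquad |Dh|\le Cr^{\alpha},\qquad |D^2h|\le Cr^{\alpha-1}.
\]
Writing $\varphi=\varphi(|x|)$, the Hessian of $u=h\varphi$ splits as
\[
D^2u=\varphi\,D^2h+\mathcal E,\qquad
\mathcal E:=\varphi'\bigl(Dh\otimes \hat x+\hat x\otimes Dh\bigr)+h\Bigl(\varphi''\,\hat x\otimes\hat x+\frac{\varphi'}{r}(I-\hat x\otimes\hat x)\Bigr),
\]
where $\hat x=x/r$. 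The error therefore satisfies $|\mathcal E|\le C(r^\alpha|\varphi'|+r^{1+\alpha}|\varphi''|)$.

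Now use the structure of $F$. Positive homogeneity gives $F(\varphi D^2h)=\varphi F(D^2h)=0$, since $\varphi>0$ near the origin in both applications. Uniform ellipticity then gives
\[
\mathcal M^-(\mathcal E)\le F(D^2u)-F(\varphi D^2h)\le \mathcal M^+(\mathcal E),
\]
so $|F(D^2u)|\le n\Lambda|\mathcal E|$, which is the first claim. Strictly, $F(D^2u)$ should be read classically away from the origin, with $u$ treated as a solution in $B_1\setminus\{0\}$, or equivalently by computing the right-hand side pointwise. I would note that this is all that is asserted.

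Next comes the case $\varphi=L^m$. Since $L'=-1/r$, we get $\varphi'=-mL^{m-1}/r$ and $\varphi''=m L^{m-1}/r^2+m(m-1)L^{m-2}/r^2$. Because $L\ge1$, this gives $|\varphi''|\le Cr^{-2}L^{m-1}$. Both error terms are then $\le Cr^{\alpha-1}L^{m-1}$. For $m=1$ the bound is $Cr^{\alpha-1}=C|x|^{-n/p_\alpha}$, and $|x|^{-n/p}$ belongs to $L^{p,\infty}$ by the standard distribution-function computation. This yields weak critical size. For $m>1$, the extra factor $L^{m-1}$ only makes the bound larger, which is the qualitative remark in the statement.

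Finally, take $\varphi=\ell^m$ with $\ell=\log L$, working near the origin where $\ell\ge1$, i.e.\ $L\ge e$. Here $\ell'=-1/(rL)$, so $|\varphi'|\le Cr^{-1}\ell^{m-1}/L$. Differentiating once more gives terms of the form $r^{-2}\ell^{m-1}L^{-1}$, $r^{-2}\ell^{m-1}L^{-2}$ and $r^{-2}\ell^{m-2}L^{-2}$, all bounded by $Cr^{-2}\ell^{m-1}/L$. Hence $|F(D^2u)|\le Cr^{\alpha-1}\ell^{m-1}/L$. For strong integrability, use polar coordinates with $p=p_\alpha$, so that $(\alpha-1)p=-n$:
\[
\int_{B_\delta}|F(D^2u)|^{p}\le C\int_0^\delta \frac{\ell(r)^{(m-1)p}}{r\,L(r)^{p}}\,dr .
\]
Substituting $t=L(r)$, $dt=-dr/r$, this becomes $\int^\infty (\log t)^{(m-1)p}t^{-p}\,dt$, which is finite because $p=p_\alpha>n\ge2>1$.

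The main point needing care is the homogeneity step $F(\varphi D^2h)=0$. This requires $\varphi>0$, which holds for both choices near $0$, and positive homogeneity of $F$. Everything else is bookkeeping with the logarithmic derivatives. One caveat: if negative powers of $\ell$ occur when $m<1$, one must stay in the region $\ell\ge1$, which is why the analysis is restricted to a small ball.
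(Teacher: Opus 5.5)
Your proposal is correct and follows essentially the same route as the paper's proof. Both arguments expand $D^2u$ by the product rule, use $F(\varphi D^2h)=\varphi F(D^2h)=0$ from positive homogeneity, bound $F(D^2u)$ by the error terms via uniform ellipticity, and finish with the same derivative bounds and the same polar-coordinate integral for the two choices of $\varphi$. Your version is somewhat more explicit: you write out the radial Hessian, including the $\varphi'/r$ term, and you note that $\varphi>0$ is needed near the origin. The argument itself is the same.
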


\begin{proof}
Since $h$ is homogeneous of degree $1+\alpha$, we have, near the origin,
\[
        |h(x)|\le Cr^{1+\alpha},
        \qquad
        |\nabla h(x)|\le Cr^\alpha,
        \qquad
        |D^2h(x)|\le Cr^{\alpha-1}.
\]
Moreover,
\[
        D^2u
        =
        \varphi D^2h
        +
        \nabla h\otimes\nabla\varphi
        +
        \nabla\varphi\otimes\nabla h
        +
        hD^2\varphi .
\]
Since $F$ is positively homogeneous and $F(D^2h)=0$,
\[
        F(\varphi D^2h)=\varphi F(D^2h)=0.
\]
Uniform ellipticity therefore gives
\[
        |F(D^2u)|
        \le
        C\left(
        r^\alpha|\varphi'(r)|
        +
        r^{1+\alpha}|\varphi''(r)|
        \right).
\]

For $\varphi(r)=L(r)^m$, one has
\[
        |\varphi'(r)|\le Cr^{-1}L(r)^{m-1},
        \qquad
        |\varphi''(r)|\le Cr^{-2}L(r)^{m-1},
\]
and therefore
\[
        |F(D^2u)|
        \le
        Cr^{\alpha-1}L(r)^{m-1}.
\]
At $p_\alpha=n/(1-\alpha)$,
\[
        r^{\alpha-1}=r^{-\frac n{p_\alpha}},
\]
which is precisely the weak-$L^{p_\alpha}$ threshold.  Thus
$r^{\alpha-1}\in L^{p_\alpha,\infty}_{\mathrm{loc}}$ but
$r^{\alpha-1}\notin L^{p_\alpha}_{\mathrm{loc}}$.  This proves the weak
critical assertion for $m=1$, while the additional factor $L(r)^{m-1}$ with
$m>1$ moves the source beyond the weak endpoint scale.

For $\varphi(r)=\ell(r)^m$, where $\ell(r)=\log L(r)$, we have
\[
        |\varphi'(r)|
        \le
        C\frac{\ell(r)^{m-1}}{rL(r)},
        \qquad
        |\varphi''(r)|
        \le
        C\frac{\ell(r)^{m-1}}{r^2L(r)}.
\]
Hence
\[
        |F(D^2u)|
        \le
        Cr^{\alpha-1}\frac{\ell(r)^{m-1}}{L(r)}.
\]
Consequently,
\[
        \int_{B_{1/2}} |F(D^2u)|^{p_\alpha}\,dx
        \le
        C\int_0^{1/2}
        \frac{\ell(r)^{p_\alpha(m-1)}}{rL(r)^{p_\alpha}}\,dr
        <\infty,
\]
because $p_\alpha>1$.  The proposition follows.
\end{proof}

Singular and nonclassical homogeneous profiles of this nature are naturally
connected with the program of Nadirashvili and Vl\u{a}du\c{t} on singular
solutions and homogeneous profiles for fully nonlinear uniformly elliptic
equations \cite{NadirashviliVladut2007,NadirashviliVladut2008,
NadirashviliVladut2011,NadirashviliVladut2013a,
NadirashviliVladut2013b}. The proposition shows that logarithmic losses are
genuinely visible at the critical homogeneous scale.  At
\[
        \sigma_p=\alpha,
\]
multiplying a homogeneous singular profile by logarithmic factors generates
right-hand sides in critical Lorentz or strong $L^p$ spaces, depending on the
strength of the logarithm.

The strict homogeneous-limited regime is different.  If
\[
        \alpha<\sigma_p,
\]
then the same singular-profile test cannot produce a logarithmic worsening
while keeping the right-hand side in the desired $L^p$ class.  Indeed, the
condition
\[
        r^{\alpha-1}L(r)^{m-1}\in L^p
\]
requires
\[
        p(1-\alpha)<n,
        \qquad\text{equivalently}\qquad
        \sigma_p<\alpha,
\]
which is precisely the source-limited side of the theory.  To enter the
regime $\sigma_p>\alpha$, one must introduce a genuine positive power gain.  For
example, if
\[
        \varphi(r)
        =
        1+r^\varepsilon
        \log\log\frac{e^e}{r},
\]
then
\[
        |F(D^2(h\varphi))|
        \le
        C r^{\alpha+\varepsilon-1}
        \log\log\frac{e^e}{r},
\]
which belongs to $L^p$ provided
\[
        \sigma_p<\alpha+\varepsilon.
\]
However,
\[
        h(x)\varphi(|x|)-h(x)
        =
        O\left(
        r^{1+\alpha+\varepsilon}
        \log\log\frac{e^e}{r}
        \right),
\]
which is higher order than the homogeneous profile and therefore does not
worsen the leading $C^{1,\alpha}$ behavior.

This is the main obstruction to proving sharpness of the logarithmic defect in
\Cref{thm:pointwise-log} by means of homogeneous singular profiles.  The
critical examples above show that logarithmic losses are natural, and in some
endpoint regimes sharp.  They do not decide, however, whether the
specific logarithmic defect in the strict homogeneous-limited regime
$\alpha_H<\sigma_p$ can be removed, improved, or shown to be sharp.  This
remains an open endpoint question.  Resolving it would likely require a
genuinely inhomogeneous construction, or a finer use of the
Nadirashvili--Vl\u{a}du\c{t} singular-profile program.  The contribution of
the present work is to isolate this obstruction and to provide a quantitative
scale-selection method that reaches the homogeneous differentiability scale up
to an explicit logarithmic loss.

\section*{Acknowledgments}
This publication is based upon work supported by King Abdullah University of
Science and Technology (KAUST). E.V.T. also acknowledges support from the Grayce B. Kerr Chair funds at Oklahoma State University.

\section*{Declarations}

\noindent\textbf{Data availability.}
Data sharing is not applicable to this article, as no datasets were generated
or analyzed during the current study.

\medskip

\noindent\textbf{Competing interests.}
The authors declare that they have no relevant financial or non-financial
interests to disclose.

\medskip

\noindent\textbf{Author contributions.}
Both authors contributed to the conception, development, writing, and revision
of the manuscript.  Both authors read and approved the final manuscript.

\bibliographystyle{amsplain}

\end{document}